\documentclass[11pt,a4paper]{article}
\usepackage{latexsym,bm}
\usepackage{mathrsfs}
\usepackage{amsmath,amsthm}
\usepackage{graphicx}
\usepackage{amssymb}
\usepackage{CJK}
\usepackage{color}
\usepackage{cite}
\usepackage{array}
\usepackage{stfloats}
\usepackage[colorlinks,
            linkcolor=blue,       
            anchorcolor=blue,  
            citecolor=blue,        
            ]{hyperref}
\usepackage{caption}
\usepackage{graphicx}
\usepackage{epstopdf}
\usepackage{upgreek}
\usepackage{tikz}

\theoremstyle{plain}
\newtheorem{thm}{Theorem}[section]
\newtheorem{cor}[thm]{Corollary}
\newtheorem{lem}[thm]{Lemma}

\theoremstyle{definition}
\newtheorem{defn}[thm]{Definition}
\theoremstyle{plain}

\theoremstyle{problem}

\theoremstyle{Question}
\newtheorem{ques}{Question}

\theoremstyle{plain}

\theoremstyle{plain}
\newtheorem{cla}{Claim}
\theoremstyle{plain}

\theoremstyle{plain}

\DeclareGraphicsExtensions{.eps,.eps.gz}
\usepackage[top=2.5cm,bottom=2.5cm,left=2.8cm,right=2.2cm]{geometry}
\normalsize \rm
\allowdisplaybreaks[4]
\makeatletter
\@addtoreset{equation}{section}
\makeatother

 \usepackage{indentfirst}
\begin{document}
\begin{CJK}{GBK}{song}
\newcommand{\song}{\CJKfamily{song}}    
\newcommand{\fs}{\CJKfamily{fs}}        
\newcommand{\kai}{\CJKfamily{kai}}      
\newcommand{\hei}{\CJKfamily{hei}}      
\newcommand{\li}{\CJKfamily{li}}        
\renewcommand\figurename{Fig.}

\begin{center}
{{\huge  Combinatorial explanation of the weighted Kirchhoff index of graphs }} \\[18pt]
{\Large Wensheng Sun$^{1}$,  Yujun Yang$^{2}$,    Shou-Jun Xu$^{1,*}$  \footnotetext{*Corresponding author\\ \noindent E-mail addresses: wensheng07002@163.com(W. Sun),  yangyj@yahoo.com(Y. Yang), shjxu@lzu.edu.cn(S.-J. Xu),    }}\\[6pt]
{ \footnotesize  $^{1}$ School of Mathematics and Statistics, Gansu Center for Applied Mathematics, Lanzhou University, Lanzhou, Gansu 730000 China\\
$^{2}$ Department of Mathematics and Artificial Intelligence, Qilu University of Technology (Shandong Academy of Sciences), Jinan, Shandong, 250100, China}
\end{center}

\vspace{1mm}
\begin{abstract}

Let $G$ be a connected graph with vertex set $V(G)=\{v_1,v_2,\ldots,v_n\}$, and let \(\omega:V(G)\to \mathbb R^+\) be a positive vertex-weight function satisfying \(\omega(v_i)=x_i\) for each \(v_i \in V(G)\). The weighted Kirchhoff index of $G$ is defined by $K(G;x_1,x_2,\ldots,x_n)=\sum_{1\le i<j\le n}x_i x_j r_G(v_i,v_j)$, where $r_G(v_i,v_j)$ denotes the resistance distance between $v_i$ and $v_j$. In this paper, we give a combinatorial interpretation of the weighted Kirchhoff index of an arbitrary connected graph. More precisely, we express $K(G;x_1,x_2,\ldots,x_n)$ in terms of the sums of weights of matchings in an appropriately weighted subdivision graph of $G$, and in the subgraphs obtained from this weighted subdivision graph by deleting the subdivision graphs corresponding to \(2\)-regular subgraphs of $G$. This gives an affirmative answer to a question posed by Li, Li and Yan [Discrete Math. 345 (2022) 113109] concerning a combinatorial explanation of the weighted Kirchhoff index of a general graph by using matchings in weighted subdivision graphs and their subgraphs. As special cases, our formula recovers the known formulas for the weighted Kirchhoff index of trees and unicyclic graphs, as well as the known formula for the ordinary Kirchhoff index of an arbitrary connected graph.

\noindent {\bf Keywords:}  Weighted Kirchhoff index; Resistance distance; Matching; Weighted subdivision graph; Laplacian determinant; Combinatorial explanation \\
\vspace{1mm}
\noindent {\bf AMS Classification: } 05C09, 05C12, 05C50
\end{abstract}

\section{Introduction}

Resistance distance is a fundamental distance function in graph theory arising from electrical network theory. Let \(G\) be a connected graph with vertex set $V(G)$ and edge set $E(G)$. If each edge of \(G\) is regarded as a unit resistor, then the \emph{resistance distance} between two vertices \(u\) and \(v\), denoted by \(r_G(u,v)\), is defined as the effective resistance between \(u\) and \(v\) in the corresponding electrical network. The notion of resistance distance was formally introduced by Klein and Randi\'{c} \cite{djk1}. In contrast to the classical shortest-path distance $d_G(u,v)$, which is the length of a shortest path connecting $u$ and $v$ \cite{djk1}, the resistance distance satisfies $r_{G}(u,v) \leq d_{G}(u,v)$  with equality if and only if $u$ and $v$ are connected by a unique path. In particular, for a tree, the resistance distance between any two vertices coincides with their distance. Based on the resistance distance, the \emph{Kirchhoff index} \cite{djk1} of a connected graph $G$ is defined by
$$
K(G)=\sum_{\{u,v\}\subseteq V(G)} r_G(u,v).
$$
It can be viewed as a resistance-distance analogue of the Wiener index
\[
W(G)=\sum_{\{u,v\}\subseteq V(G)} d_G(u,v).
\]
As an important graph structure descriptor, the Kirchhoff index plays an essential role in the study of QSAR and QSPR in theoretical chemistry \cite{tws}. It has also been applied to the analysis of network connectivity and
network coherence in circuit theory and complex networks \cite{mty}. For more details, the reader is referred to the recent papers \cite{ani,jhu,jzh,wko,wsu,kcd} and the references therein.

In 2007, Chen and Zhang \cite{hch} introduced the \emph{degree-Kirchhoff index} of a connected graph \(G\), defined by
$$
K'(G)=\sum_{\{u,v\}\subseteq V(G)} d_G(u)d_G(v)r_G(u,v),
$$
where \(d_G(u)\) denotes the degree of the vertex \(u\) in \(G\). This index combines resistance distance with local degree information and can be viewed as a resistance-distance analogue of degree-distance-type graph invariants. Some results on the degree Kirchhoff index of graphs can be found in \cite{jhu1,shu,jpa,zta}.

More generally, let \(G\) be a vertex-weighted connected graph with vertex-weight function \(\omega:V(G)\to \mathbb R^+\). Suppose that \(V(G)=\{v_1,v_2,\ldots,v_n\}\) and \(\omega(v_i)=x_i\) for each \(v_i \in V(G)\). In \cite{sli}, Li, Li and Yan introduced a new index for vertex-weighted graphs, called the \emph{weighted Kirchhoff index}, which is defined by
$$
K(G;x_1,x_2,\ldots,x_n)
=
\sum_{1\le i<j\le n} x_i x_j r_G(v_i,v_j).
$$
Interestingly, this definition contains several important indices as special cases. If we set \(x_i=1\) for every \(v_i \in V(G)\), then we obtain the ordinary Kirchhoff index
\[
K(G;1,1,\ldots,1)=K(G).
\]
If we set \(x_i=d_G(v_i)\) for every \(v_i \in V(G)\), then we obtain the degree Kirchhoff index
\[
K(G;d_G(v_1),d_G(v_2),\ldots,d_G(v_n))=K'(G).
\]
These observations show that the weighted Kirchhoff index provides a unified framework for both the ordinary Kirchhoff index and the degree Kirchhoff index. However, computing the weighted Kirchhoff index for a general graph \(G\)
is far from straightforward.

The aim of this paper is to give a combinatorial interpretation of the weighted Kirchhoff index of an arbitrary connected graph. More precisely, we express $K(G;x_1,x_2,\ldots,x_n)$ in terms of the sums of weights of matchings in an appropriately weighted subdivision graph of $G$, and in the subgraphs obtained from this weighted subdivision graph by deleting the subdivision graphs corresponding to \(2\)-regular subgraphs of $G$. This gives an affirmative answer to the question in \cite{sli} of finding a matching interpretation for the weighted Kirchhoff index of general graphs. As special cases, our formula recovers the known formulas for the weighted Kirchhoff index of trees and unicyclic graphs, as well as the known formula for the ordinary Kirchhoff index of an arbitrary connected graph.

\section{Preliminaries}

\subsection{Notation and basic definitions}
Let \(G=(V(G),E(G))\) be a graph with \(|V(G)|=n\). For an edge \(e=uv\in E(G)\), we denote by \(V(e)=\{u,v\}\) the set of its endpoints.  A subset \(M\subseteq E(G)\) is called
a matching of \(G\) if no two edges in \(M\) share a common endpoint, i.e., $V(e)\cap V(f)=\varnothing $ for any two distinct edges $e,f\in M$. For a nonnegative integer \(j\), let
\begin{equation*}
\mathcal M_j(G)=
\{M\subseteq E(G)\mid M\text{ is a matching of }G,\ |M|=j\}.
\end{equation*}
We denote by \(m(G,j)\) the number of matchings of \(G\) with \(j\) edges, that is, $m(G,j)=|\mathcal M_j(G)|.$ By convention, we set $m(G,0)=1$ and $m(G,j)=0$ for $ j<0$ or $j>\left\lfloor \frac{n}{2}\right\rfloor.$ Now let $G^\omega$ be an edge-weighted graph with weight function $\omega:E(G)\to \mathbb R^+$. For a matching \(M\in\mathcal M_j(G)\),  the weight of  $M$ in $G^\omega$ is the product of weights of edges in $M$, that is
\begin{equation*}
\omega(M)=\prod_{e\in M}\omega(e).
\end{equation*}
The weighted matching number of $G^\omega$ with $j$ edges is defined as
\begin{equation*}
m(G^\omega,j)=\sum_{M\in\mathcal M_j(G)}\omega(M)=\sum_{M\in\mathcal M_j(G)}\prod_{e\in M}\omega(e).
\end{equation*}
In particular, when all edge weights are equal to \(1\), this definition reduces to the ordinary matching number \(m(G,j)\).

The \emph{subdivision graph} of $G$, denoted by $S(G)$, is the graph obtained from $G$ by inserting an additional vertex $e^*$ into each edge $e$ of $G$, see Fig. \ref{Fig1}(b).  In this way,  we can represent $V(S(G))= V(G) \cup \{e^* | e \in E(G) \}$ and $E(S(G))= \{ue^*,ve^* \mid e=uv \in E(G)\}$. For convenience, we refer to a vertex $v_i \in V(G)$ and a vertex $e^* \in \{e^* \mid e \in E(G) \}$  as a \emph{$v$-vertex} and a \emph{$e$-vertex} of $S(G)$, respectively.

\begin{defn}
Let $G$ be a vertex-weighted graph with vertex-weight function $\omega: V(G)\rightarrow \mathbb R^+$ satisfying $\omega(v_i)=x_i$ for each $v_i \in V(G)$. We use $S(G)^{\omega^\ast}$ to denote an edge-weighted
subdivision graph obtained from $S(G)$ by weighting each edge $v_ie^*$ with $\frac{1}{x_i}$ (where $e=v_iv_j \in E(G)$). See Fig. \ref{Fig1}(c) for an example. For convenience, if \(x_i=d_G(v_i)\) for each \(v_i\in V(G)\), then we write \(S(G)^{\omega'}\) for the corresponding edge-weighted subdivision graph. That is, in \(S(G)^{\omega'}\), each edge \(v_i e^\ast\) has weight \(1/d_G(v_i)\), where \(e=v_i v_j\in E(G)\).
\end{defn}

\begin{figure}[htbp]
\centering
\begin{tikzpicture}[
    vnode/.style={circle, fill, inner sep=1.5pt},
    elabel/.style={font=\small, midway},
    xscale=1.5, yscale=1.2
]

\begin{scope}[shift={(0,0)}]
    \node[vnode, label=above:$v_1$] (v1) at (0,0) {};
    \node[vnode, label=left:$v_2$] (v2) at (-1,-1) {};
    \node[vnode, label=below:$v_3$] (v3) at (0,-2) {};   
    \node[vnode, label=right:$v_4$] (v4) at (1,-1) {};  

    \draw (v1) -- (v2) node[elabel, above] {$e_1$};
    \draw (v2) -- (v3) node[elabel, below ] {$e_2$};  
    \draw (v3) -- (v4) node[elabel, below] {$e_3$};
    \draw (v4) -- (v1) node[elabel, above] {$e_4$};
    \draw (v1) -- (v3) node[elabel, left, xshift=3pt] {$e_5$};

    \node at (0,-3) {(a)};
\end{scope}

\begin{scope}[shift={(3.5,0)}]
    \node[vnode, label=above:$v_1$] (v1) at (0,0) {};
    \node[vnode, label=left:$v_2$] (v2) at (-1,-1) {};
    \node[vnode, label=below:$v_3$] (v3) at (0,-2) {};
    \node[vnode, label=right:$v_4$] (v4) at (1,-1) {};
    \node[vnode, label=above:$e^*_1$] (e^*_1) at (-0.5,-0.5) {}; %
    \node[vnode, label=below:$e^*_2$] (e^*_2) at (-0.5,-1.5) {};
    \node[vnode, label=below:$e^*_3$] (e^*_3) at (0.5,-1.5) {};
    \node[vnode, label=above:$e^*_4$] (e^*_4) at (0.5,-0.5) {};
    \node[vnode, label={[xshift=3pt]left:$e^*_5$}] (e5star) at (0,-1) {};
    \draw (v1) -- (v2) node[elabel, above] {};
    \draw (v2) -- (v3) node[elabel, below ] {};  
    \draw (v3) -- (v4) node[elabel, below] {};
    \draw (v4) -- (v1) node[elabel, above] {};
    \draw (v1) -- (v3) node[elabel, left, xshift=3pt] {};

    \node at (0,-3) {(b)};
\end{scope}

\begin{scope}[shift={(7,0)}]
    \node[vnode, label=above:$v_1$] (v1) at (0,0) {};
    \node[vnode, label=left:$v_2$] (v2) at (-1,-1) {};
    \node[vnode, label=below:$v_3$] (v3) at (0,-2) {};
    \node[vnode, label=right:$v_4$] (v4) at (1,-1) {};
    \node[vnode, label=above:$e^*_1$] (e^*_1) at (-0.5,-0.5) {}; %
    \node[vnode, label=below:$e^*_2$] (e^*_2) at (-0.5,-1.5) {};
    \node[vnode, label=below:$e^*_3$] (e^*_3) at (0.5,-1.5) {};
    \node[vnode, label=above:$e^*_4$] (e^*_4) at (0.5,-0.5) {};
    \node[vnode, label={[xshift=3pt]left:$e^*_5$}] (e^*_5) at (0,-1) {};
    \draw (v1) -- (e^*_1) node[pos=0.5, elabel, above] {$\scriptstyle \frac{1}{x_1}$};
    \draw (v2) -- (e^*_1) node[pos=0.5, elabel, above] {$\scriptstyle \frac{1}{x_2}$};
    \draw (v2) -- (e^*_2) node[pos=0.5, elabel, below] {$\scriptstyle \frac{1}{x_2}$};
    \draw (v3) -- (e^*_2) node[pos=0.5, elabel, below] {$\scriptstyle \frac{1}{x_3}$};
    \draw (v3) -- (e^*_3) node[pos=0.5, elabel, below] {$\scriptstyle \frac{1}{x_3}$};
    \draw (v4) -- (e^*_3) node[pos=0.5, elabel, below] {$\scriptstyle \frac{1}{x_4}$};
    \draw (v4) -- (e^*_4) node[pos=0.5, elabel, above] {$\scriptstyle \frac{1}{x_4}$};
    \draw (v1) -- (e^*_4) node[pos=0.5, elabel, above] {$\scriptstyle \frac{1}{x_1}$};
    \draw (v1) -- (e^*_5) node[pos=0.5, elabel, left, xshift=3pt] {$\scriptstyle \frac{1}{x_1}$};
    \draw (v3) -- (e^*_5) node[pos=0.5, elabel, left, xshift=3pt] {$\scriptstyle \frac{1}{x_3}$};

    \node at (0,-3) {(c)};
\end{scope}

\end{tikzpicture}
\caption{(a) The graph \(G\), (b) The subdivision graph \(S(G)\), (c) The edge-weighted subdivision graph \(S(G)^{\omega^\ast}\).} \label{Fig1}

\end{figure}
Let \(G\) be a connected graph. If \(H\) is a subgraph of \(G\), we use \(G-H\) to denote the graph obtained from \(G\) by deleting all vertices of \(H\), together with their incident edges. Denote by \(\mathcal C(G)\) the set of all nonempty \(2\)-regular subgraphs of \(G\), equivalently, the set of all nonempty disjoint unions of cycles in \(G\). For \(C\in\mathcal C(G)\), let \(p(C)\) be the number of cycles of \(C\). Since \(C\) is \(2\)-regular, \(|E(C)|=|V(C)|\); we denote this common number by \(|C|\).

We use \(\tau(G)\) to denote the number of \emph{spanning trees} of \(G\).  Let $A_G$ denote the adjacency matrix of $G$, and let $D_G$ denote the diagonal matrix of vertex degrees of $G$. The matrix $L_G=D_G-A_G$ is called the \emph{Laplacian matrix} of $G$. For a vertex $u \in V(G)$, let $L_G(u)$ denote the principal submatrix of $L_G$ obtained by deleting the row and column corresponding to $u$. For two distinct vertices \(u,v\in V(G)\), let \(L_G(u,v)\) denote the principal submatrix of \(L_G\) obtained by deleting the rows and columns corresponding to \(u\) and \(v\). Bapat, Gutman and Xiao \cite{rbb2} gave the
following determinantal formula for resistance distances.
\begin{lem}\cite{rbb2}\label{lem2.3}
Let \(G\) be a connected graph. For two distinct vertices $u, v \in V(G)$, we have
$$r_G(u,v)=\frac{\det L_G(u,v)}{\det L_G(u)}=\frac{\det L_G(u,v)}{\tau(G)}.$$
\end{lem}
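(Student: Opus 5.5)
The statement is the Bapat--Gutman--Xiao identity $r_G(u,v)=\det L_G(u,v)/\det L_G(u)$, with the second equality coming from the Matrix-Tree theorem. My plan is to derive it from the electrical definition of effective resistance together with Cramer's rule.

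First, I would fix the standard electrical characterization. Inject a unit current at $u$ and extract it at $v$, with every edge a unit resistor. Kirchhoff's current law and Ohm's law then say that the potential vector $\phi$ satisfies $L_G\phi=e_u-e_v$. The effective resistance is $r_G(u,v)=\phi(u)-\phi(v)$. Since $G$ is connected, $\ker L_G$ is spanned by the all-ones vector, so $\phi$ is unique up to an additive constant. I will normalize it by grounding $v$, that is, setting $\phi(v)=0$; then $r_G(u,v)=\phi(u)$.

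Second, delete the row and column of $v$. The remaining equations read $L_G(v)\,\phi'=e_u$, where $\phi'$ is $\phi$ restricted to $V(G)\setminus\{v\}$. This is legitimate because the deleted column is multiplied by $\phi(v)=0$, and the deleted row is the redundant equation (the rows of $L_G$ sum to zero). The matrix $L_G(v)$ is nonsingular, since by the Matrix-Tree theorem $\det L_G(v)=\tau(G)>0$ for connected $G$.

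Third, apply Cramer's rule, or equivalently read off the $(u,u)$ entry of $L_G(v)^{-1}$:
\[
\phi(u)=\frac{\det L_G(u,v)}{\det L_G(v)}.
\]
Here replacing the $u$-column by $e_u$ and expanding along that column leaves exactly the principal minor with rows and columns $u$ and $v$ removed, and the cofactor sign is $+1$ because the minor is principal.

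Finally, the Matrix-Tree theorem gives $\det L_G(v)=\det L_G(u)=\tau(G)$, since all principal $(n-1)$-minors of $L_G$ equal $\tau(G)$. This yields both stated expressions.

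The main obstacle is the first step: justifying that the effective resistance defined physically equals $\phi(u)-\phi(v)$ for a solution of $L_G\phi=e_u-e_v$. I would simply take this as the standard formalization of Klein--Randi\'c's definition. If a purely combinatorial route is preferred, the alternative is the all-minors Matrix-Tree theorem: $\det L_G(u,v)$ counts spanning forests with two trees separating $u$ and $v$. Combined with the classical Kirchhoff formula $r_G(u,v)=$ (number of such 2-forests)$/\tau(G)$, this gives the same identity.
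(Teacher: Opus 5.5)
The paper gives no proof of this lemma. It is imported from \cite{rbb2} and used only once, to obtain $\det L_G[X]=\tau(G)r_G(v_i,v_j)$ in Eq.~\eqref{eq3.2}.

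Your derivation is correct, and it is the standard one. Grounding $v$, the potential solves $L_G(v)\phi'=e_u$. The discarded row is automatically satisfied because the rows of $L_G$ sum to the zero vector. The cofactor formula for $(L_G(v)^{-1})_{uu}$ then gives $\det L_G(u,v)/\det L_G(v)$, with the empty-determinant convention covering $n=2$.

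A small economy is available if you ground $u$ instead of $v$.
\begin{itemize}
\item The reduced system becomes $L_G(u)\psi=-e_v$, so $r_G(u,v)=-\psi(v)=(L_G(u)^{-1})_{vv}=\det L_G(u,v)/\det L_G(u)$ directly.
\item Invertibility of $L_G(u)$ follows from $z^{T}L_Gz=\sum_{ab\in E(G)}(z_a-z_b)^2$. For connected $G$ this vanishes only for constant $z$, so $L_G(u)$ is positive definite on vectors with $z_u=0$.
\item In that form the first equality is pure linear algebra. The Matrix-Tree theorem is then needed only for $\det L_G(u)=\tau(G)$, not also to trade $\det L_G(v)$ for $\det L_G(u)$.
\end{itemize}

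Compared with the paper's bare citation, your argument makes explicit that Eq.~\eqref{eq3.2} rests only on two inputs: the Kirchhoff--Ohm formalization of effective resistance and the Matrix-Tree theorem.

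Your alternative route via spanning $2$-forests is also valid. It would give a second combinatorial evaluation of the same minor $\det L_G[X]$ that the paper's Claim~\ref{claim1} expands in terms of matchings of $S(G)$.
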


\subsection{Combinatorial explanation of the ordinary Kirchhoff index of graphs}
In this subsection, we recall several known results concerning combinatorial interpretations of the Kirchhoff index of graphs via matchings in the subdivision. For a tree $T$,  clearly, the Wiener and Kirchhoff indices of $T$ coincide. In 2006, Yan and Yeh \cite{wya1} showed that
\begin{thm}\cite{wya1}\label{tm2.4}
For a tree \(T\) with \(n\) vertices, the Kirchhoff index (Wiener index) of $T$ can be expressed as
\begin{equation*}
W(T)=K(T)=m(S(T),n-2).
\end{equation*}
\end{thm}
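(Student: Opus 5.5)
We prove $W(T)=m(S(T),n-2)$ by a direct bijective count. Since resistance and ordinary distance coincide in a tree, it suffices to show that $m(S(T),n-2)=\sum_{\{u,v\}\subseteq V(T)} d_T(u,v)$. The tree $T$ has $n-1$ edges, so $S(T)$ has $n+(n-1)=2n-1$ vertices: $n$ $v$-vertices and $n-1$ $e$-vertices. A matching of size $n-2$ therefore covers $2n-4$ vertices and leaves exactly three uncovered. Every edge of $S(T)$ joins a $v$-vertex to an $e$-vertex, so the matching covers $n-2$ vertices of each type. Hence exactly two $v$-vertices, say $u$ and $v$, and exactly one $e$-vertex $f^*$ remain uncovered.

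Fix the two uncovered $v$-vertices $u\neq v$. We count matchings of $S(T)$ that saturate every $e$-vertex except one and every $v$-vertex except $u,v$. Such a matching is the same as choosing an edge $f$ of $T$ and a perfect matching of $S(T)-\{u,v,f^*\}$. A perfect matching of $S(T)-u$ is equivalent to an orientation of each edge of $T$ toward the $v$-endpoint it is matched to, with every vertex other than $u$ receiving exactly one edge. Since $T$ is a tree, this orientation is unique: every edge points away from $u$.

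Now delete $f^*$ as well. Removing $f$ splits $T$ into two components $T_1\ni u$ and $T_2$. On $T_1$, the matching restricted there must be the unique perfect matching of $S(T_1)-u$ if $v\notin T_1$. On $T_2$, which has one more $v$-vertex than $e$-vertex, exactly one $v$-vertex must remain uncovered, and that vertex must be $v$. So the matching exists and is unique precisely when $f$ separates $u$ from $v$, and in that case it is the perfect matching of $S(T_2)-v$ together with that of $S(T_1)-u$. If instead $f$ does not separate $u$ from $v$, both $u$ and $v$ lie in the same component. That component then has two uncovered $v$-vertices and the other component has none, and the vertex counts rule this out: a subdivided tree with all $v$-vertices covered would need more $e$-vertices than it has. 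The edges separating $u$ from $v$ are exactly the edges of the $u$--$v$ path, so the number of matchings with uncovered set $\{u,v,f^*\}$, summed over $f$, is $d_T(u,v)$.

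Summing over all unordered pairs $\{u,v\}$ gives $m(S(T),n-2)=\sum d_T(u,v)=W(T)=K(T)$. The main obstacle is the uniqueness and existence step, namely that a subdivided tree with one $v$-vertex removed has exactly one perfect matching and none otherwise. This follows by induction on leaves: a leaf $v$-vertex other than the removed one must be matched to its unique neighbouring $e$-vertex, and we then delete both and recurse.
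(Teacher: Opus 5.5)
Your proof is correct, and it takes a different route from the paper's. The paper gives no separate proof of this statement, which it quotes from Yan--Yeh. It recovers it as the case of Theorem~\ref{tm2.10} with $G=T$ and all $x_i=1$, where $\tau(T)=1$ and $\mathcal C(T)=\varnothing$.

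\textbf{The paper's route.} The per-pair identity $m(S(T)-v_i-v_j,n-2)=r_T(v_i,v_j)$ comes from writing $r_T(v_i,v_j)=\det L_T(v_i,v_j)$ via Lemma~\ref{lem2.3}. The determinant is then expanded by Leibniz, and each transposition $(u\,v)$ is cancelled against the term in which the fixed points $u,v$ both choose the edge $uv$. Only injective edge-choices survive, i.e.\ the $(n-2)$-matchings of $S(T)-v_i-v_j$.

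\textbf{Your route.} You share the outer decomposition: sorting $(n-2)$-matchings by their two uncovered $v$-vertices. You prove the per-pair identity in the form $m(S(T)-u-v,n-2)=d_T(u,v)$ by a direct bijection. The unique uncovered $e$-vertex must lie on the $u$--$v$ path, and leaf-stripping on the two components then forces the rest of the matching. Your counting argument for the non-separating case is sound.

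\textbf{What each buys.} Your route is completely elementary: no Laplacian, no resistance-determinant formula, no sign-reversing involution. It proves the Wiener-index identity directly, and $r_T=d_T$ is needed only to rename it $K(T)$. The cost is that it is tied to trees. The balance $|E|=|V|-1$ on each piece, which forces the matching once $f$ is chosen, fails as soon as cycles are present. Matchings saturating the $v$-vertices then also arise from unicyclic pieces, each cycle contributing two (one per orientation). That factor $2$ is exactly what the $(-2)^{p(C)}$ terms correct for. The determinant argument handles arbitrary connected graphs and arbitrary vertex weights in one stroke, at the cost of the linear-algebra machinery.

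Note also that deleting an edge $f$ of the $u$--$v$ path is precisely choosing a spanning $2$-forest separating $u$ and $v$. That is what $\det L_T(u,v)$ counts by the all-minors matrix-tree theorem, so the two proofs evaluate the same quantity from opposite ends.
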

Later, in 2021, Chen and Yan \cite{ych} gave a combinatorial interpretation of the Kirchhoff index for unicyclic graphs, as follows.

\begin{thm}\cite{ych}\label{tm2.5}
Let \(G\) be an arbitrary connected unicyclic graph with \(n\) vertices, and suppose that \(G\) has a cycle \(C_k\) with \(k\) vertices. Then the Kirchhoff index  of \(G\) can be expressed as
\begin{equation*}
K(G)=\begin{cases}\displaystyle\frac{m(S(G),n-2)-2m(S(G)-S(C_k),n-k-2)}{k},
& \text{if } 3\le k\le n-2, \\[3mm]
\displaystyle\frac{m(S(G),n-2)}{k},
& \text{if } k=n-1 \text{ or } k=n.
\end{cases}
\end{equation*}
\end{thm}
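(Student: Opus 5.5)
The plan is to combine Lemma~\ref{lem2.3} with the Cauchy--Binet formula, and then read off the resulting signed sum as a matching count in $S(G)$.

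\textbf{Reduction to minors.} Since $G$ is unicyclic with cycle $C_k$, we have $\tau(G)=k$. By Lemma~\ref{lem2.3},
\[
K(G)=\frac1k\sum_{\{u,v\}\subseteq V(G)}\det L_G(u,v),
\]
so it suffices to show
\[
\sum_{\{u,v\}}\det L_G(u,v)=m(S(G),n-2)-2\,m(S(G)-S(C_k),n-k-2).
\]
The case $k\ge n-1$ is then automatic, because $n-k-2<0$ and so that term vanishes by convention.

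\textbf{Cauchy--Binet expansion.} Write $L_G=BB^{T}$, where $B$ is an oriented vertex--edge incidence matrix ($n\times n$, since $|E(G)|=n$). Let $U=V(G)\setminus\{u,v\}$. Cauchy--Binet gives
\[
\det L_G(u,v)=\sum_{F}\bigl(\det B_{U,F}\bigr)^2,
\]
where $F$ ranges over $(n-2)$-subsets of $E(G)$. I then expand each determinant as a signed sum over bijections $\sigma:U\to F$ with each $w\in U$ incident to $\sigma(w)$. Such a bijection is exactly a matching of size $n-2$ in $S(G)$: it pairs every $v$-vertex in $U$ with an $e$-vertex in $F^*$. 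Conversely, every $(n-2)$-matching of $S(G)$ arises this way for a unique pair $\{u,v\}$ and a unique set $F$, namely the unmatched $v$-vertices and the matched $e$-vertices. So $m(S(G),n-2)$ equals the sum, over all pairs $\{u,v\}$ and sets $F$, of the number of such bijections, i.e.\ the permanent of $|B_{U,F}|$.

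\textbf{Comparing determinant and permanent.} Consider the spanning subgraph $(V(G),F)$. It has $n-2$ edges and at most one cycle, and a nonzero permanent forces each component to have at least as many $U$-vertices as edges. There are two cases.
\begin{itemize}
\item If $F$ is acyclic, it is a spanning forest with two trees, containing $u$ and $v$ respectively. Each tree admits exactly one bijection: orient every edge toward the deleted vertex and match each vertex with its outgoing edge. Hence $\operatorname{per}=(\det)^2=1$.
\item If $F\supseteq E(C_k)$, the edge count forces three components. One is unicyclic and lies entirely in $U$; the other two are trees containing $u$ and $v$. The unicyclic component has exactly $2$ bijections, one for each direction of the cycle, so $\operatorname{per}=2$. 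Its rows of $B$ sum to zero on its column block, so $\det B_{U,F}=0$.
\end{itemize}
Therefore $\sum_{\{u,v\}}\det L_G(u,v)$ equals $m(S(G),n-2)$ minus the number of $(n-2)$-matchings of $S(G)$ in the second case.

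\textbf{Counting the second case.} These are precisely the $(n-2)$-matchings whose restriction to $S(C_k)$ is one of its $2$ perfect matchings. The remaining $n-k-2$ edges then form an arbitrary matching of $S(G)-S(C_k)$, so the count is $2\,m(S(G)-S(C_k),n-k-2)$, which gives the identity.

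The main obstacle is the case analysis linking $\det^2$ and the permanent. I must check carefully that whenever the permanent is nonzero, $F$ is either a two-tree forest or contains the cycle within a fully-$U$ component, with no other configuration possible. I must also check that in the second case every such matching contains a perfect matching of $S(C_k)$, rather than the cycle merely appearing in $F$.
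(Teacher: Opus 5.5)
Your argument is correct, but it takes a different route from the paper. The paper does not prove this statement separately. It recovers it from Theorem~\ref{tm2.10} by taking all $x_i=1$, with $\tau(G)=k$, $\mathcal C(G)=\{C_k\}$ and $p(C_k)=1$.

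The paper's proof of Theorem~\ref{tm2.10} expands $\det L_G(v_i,v_j)$ directly by the Leibniz formula and writes each diagonal entry $d_G(x)$ as a sum over the edges incident with $x$. A sign-reversing involution then cancels each transposition $(u\,v)$ against the term in which the fixed points $u,v$ both choose the edge $uv$. The surviving terms are of two kinds. The first is fixed points choosing distinct edges, which are the matchings. The second is permutation cycles of length at least $3$ running along cycles of $G$; each oriented cycle contributes sign $-1$ and has two orientations, which is where $(-2)^{p(C)}$ comes from.

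You instead factor $L_G=BB^{T}$ and apply Cauchy--Binet. In your version the $-2$ appears as $\det^2-\operatorname{per}=0-2$ on the sets $F$ that close $C_k$ inside $U$. This buys you two things:
\begin{itemize}
\item You avoid the involution and its bookkeeping: the choice of a smallest bad edge, and the check that no bad edge is created or destroyed.
\item You get a structural explanation. $\det L_G(u,v)$ counts spanning $2$-forests separating $u$ and $v$, while the $(n-2)$-matchings of $S(G)$ also count the configurations in which $F$ contains the cycle, each such $F$ twice.
\end{itemize}
The price is that your argument uses essentially that $G$ has only one cycle. For a general graph, $F$ can close several disjoint cycles at once, giving permanent $2^{s}$ and determinant $0$. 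Recovering the $(-2)^{p(C)}$ coefficients of Theorem~\ref{tm2.10} would then need an extra inclusion--exclusion over subsets of those cycles, via $\sum_{\varnothing\neq S\subseteq[s]}(-1)^{|S|}=-1$. The Leibniz expansion produces those coefficients directly.

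The two points you list as obstacles are already settled by what you wrote.
\begin{itemize}
\item Since $G$ has exactly one cycle, $F$ is either acyclic or contains $E(C_k)$. Your component count then forces the stated structure whenever the permanent is nonzero. When the permanent is zero, $\det B_{U,F}=0$ as well, because both are sums over the same set of bijections; it is worth saying this explicitly.
\item The only neighbours of an $e$-vertex $e^\ast$ with $e\in E(C_k)$ are the two ends of $e$, both on $C_k$. So if all $k$ such vertices are matched, the $k$ edges used cover all $2k$ vertices of $S(C_k)$ and form one of its two perfect matchings.
\end{itemize}
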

In this direction, Que and Chen \cite{lqu} generalized the above results to any connected simple graph.

\begin{thm}\cite{lqu}\label{tm2.6}
Let \(G\) be a connected graph with \(n\) vertices. Then
\begin{equation*}
K(G)=\frac{1}{\tau(G)} \Bigg[m(S(G),n-2)+\displaystyle\sum_{C\in\mathcal C(G)}(-2)^{p(C)}m(S(G)-S(C),n-2-|C|)\Bigg].
\end{equation*}
\end{thm}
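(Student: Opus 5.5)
Theorem \ref{tm2.6} is the ordinary case $x_i=1$. I would prove it through Lemma \ref{lem2.3}. Summing over pairs gives
\[
\tau(G)K(G)=\sum_{\{u,v\}}\det L_G(u,v).
\]
The standard identity for the characteristic polynomial $\det(\lambda I-L_G)=\sum_k(-1)^k c_k\lambda^{n-k}$ says that $c_k$ is the sum of all $k\times k$ principal minors of $L_G$. The pair sum is the sum of all $(n-2)\times(n-2)$ principal minors, so $\tau(G)K(G)=c_{n-2}$ up to sign. It therefore suffices to show
\[
\sum_{|U|=n-2}\det L_G[U]=m(S(G),n-2)+\sum_{C\in\mathcal C(G)}(-2)^{p(C)}m(S(G)-S(C),n-2-|C|).
\]

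To do this I expand each principal minor $\det L_G[U]$ as a permutation sum, with $L_G=D_G-A_G$. A permutation of $U$ splits into fixed points and cycles of length at least $2$.
\begin{itemize}
\item A fixed point $w$ contributes either $d_G(w)$ or, in the $2$-cycle case below, is treated separately. Writing $d_G(w)$ as a choice of an incident edge $e$ amounts to choosing the $S(G)$-edge $we^*$.
\item A $2$-cycle $(u\,v)$ with $uv\in E$ contributes $-1$. Cycles of length $\ge3$ in $A_G$ correspond to cycles $C$ of $G$; each contributes $(-1)^{\,\ell-1}(-1)^{\ell}=-1$, times $2$ for its two orientations.
\end{itemize}
The standard grouping is to regroup the diagonal terms into matchings of $S(G)$: each $v$-vertex of $U$ outside the permutation cycles is matched to an $e$-vertex. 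Conflicts arise when two chosen edges $ue^*,ve^*$ share an $e$-vertex. By inclusion--exclusion these conflict terms cancel exactly against the $2$-cycle terms $-1\cdot(-1)$. This is the sign-reversing involution step, and I expect it to be the main obstacle to get exactly right.

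After the cancellation, each surviving term consists of three pieces: a set of vertex-disjoint cycles $C\in\mathcal C(G)$ with length $\ge3$ covered by the permutation, carrying weight $(-2)^{p(C)}$; a matching in $S(G)-S(C)$ saturating the remaining $v$-vertices of $U$; and two $v$-vertices left out, corresponding to the deleted $u,v$. A matching of size $n-2-|C|$ in $S(G)-S(C)$ covers only $v$-vertices, since each edge of $S(G)$ has exactly one $v$-end. So it saturates exactly $n-2-|C|$ of the $n-|C|$ remaining $v$-vertices, and the two uncovered ones determine $\{u,v\}$. This gives a bijection, and summing over $C$ yields the formula.

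To carry out the cancellation I would first verify the identity $\det L_G[U]=\sum_{C}(-2)^{p(C)}\,\#\{\text{matchings of }S(G)-S(C)\text{ saturating }U\setminus V(C)\}$. The route is a Laplace-type expansion: write $L_G=BB^{\top}$-like via the vertex–edge incidence structure. Alternatively, substitute $d_G(w)=\sum_{e\ni w}1$ and show that the terms in which two vertices choose the same edge $e=uv$ cancel with the $2$-cycle $(u\,v)$ terms, via the involution that toggles between "both $u$ and $v$ pick $e$" and "$(u\,v)$ is a transposition". Cycles of length $\ge3$ are untouched by this involution and survive with weight $-2$ each. The $2$-regular subgraphs in $\mathcal C(G)$ are exactly cycle unions of length $\ge3$ because $G$ is simple.
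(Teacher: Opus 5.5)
Your proposal is correct and is essentially the paper's proof of Theorem~\ref{tm2.10} specialized to $x_i=1$. The shared steps are: Lemma~\ref{lem2.3} for each pair; the Leibniz expansion with each diagonal entry $d_G(w)$ split into edge choices; cancellation of a transposition $(u\,v)$ against $u,v$ both choosing $uv$; the factor $(-2)^{p(C)}$ from oriented cycles of length at least $3$; and the bijection with matchings of $S(G)-S(C)$ that leave exactly two $v$-vertices uncovered, which is what collapses the sum over pairs. For the step you flagged, the paper makes the toggle a well-defined involution by always acting on the smallest bad edge in a fixed order. Your inclusion--exclusion phrasing also works, since conflict pairs are automatically vertex-disjoint, and the characteristic-polynomial and $BB^{\top}$ remarks are not needed.
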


Note that Theorem \ref{tm2.6} reduces to Theorem \ref{tm2.4} when \(G\) is a tree, and to Theorem \ref{tm2.5} when \(G\) is a unicyclic graph with cycle \(C_k\).

\subsection{Combinatorial explanation of the weighted Kirchhoff index of graphs}
Note that the weighted Kirchhoff index is a natural generalization of the ordinary Kirchhoff index and the degree Kirchhoff index. It is therefore natural to ask whether the combinatorial interpretations of the Kirchhoff index in terms of matchings can be extended to the weighted Kirchhoff index of graphs. As an important special case, Yan and Li~\cite{wya2} obtained a matching formula for the degree Kirchhoff index of trees.
\begin{thm}\cite{wya2}\label{tm2.7}
Let \(T\) be a tree with vertex set $V(T)= \{v_1,v_2,\ldots,v_n\}$. Then the degree Kirchhoff index of \(T\) satisfies
\begin{equation*}
K'(T)=m\left(S(T)^{\omega'},n-2\right) \prod_{i=1}^{n}d_T(v_i),
\end{equation*}
\end{thm}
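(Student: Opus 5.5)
The statement to prove is Theorem \ref{tm2.7}: for a tree $T$,
\[
K'(T)=\sum_{i<j}d_id_j\,d_T(v_i,v_j)=m(S(T)^{\omega'},n-2)\prod_i d_i .
\]
I would prove it by a direct bijective count, generalizing the argument behind Theorem \ref{tm2.4}. Multiplying a matching weight by $\prod_i d_i$ replaces each factor $1/d_i$, for a matched $v$-vertex $v_i$, by $1$, and leaves $d_k$ for each unmatched $v$-vertex $v_k$. Hence
\[
m(S(T)^{\omega'},n-2)\prod_i d_i=\sum_{M}\prod_{v_k\notin V(M)} d_k ,
\]
where the sum runs over matchings $M$ of $S(T)$ with $n-2$ edges.

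\textbf{Structure of the matchings.} $S(T)$ is bipartite with $n$ $v$-vertices and $n-1$ $e$-vertices. A matching of size $n-2$ therefore leaves exactly two $v$-vertices $v_i,v_j$ unmatched and exactly one $e$-vertex $e^*$ unmatched.

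\textbf{The bijection.} Fix $i<j$ and an edge $e$. Consider the forest $S(T)-\{v_i,v_j,e^*\}$. A perfect matching of it exists precisely when $e$ lies on the $v_i$–$v_j$ path of $T$, and in that case it is unique. This is the standard argument from Yan–Yeh. Remove $e$ from $T$ to get two components, and root them at the ends of $e$.
\begin{itemize}
\item Every $e$-vertex must be matched toward the root side it is forced to.
\item Counting $v$-vertices against $e$-vertices in each component shows that the two deleted $v$-vertices must be separated by $e$, one in each component.
\item Within a component rooted at $r$ with one deleted vertex $v_i$, the edges on the $r$–$v_i$ path are matched toward $v_i$, and all others toward their parent. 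This is forced by leaf-peeling, so the matching is unique.
\end{itemize}
Consequently, for fixed $\{v_i,v_j\}$ the number of such matchings equals the number of edges on the path, namely $d_T(v_i,v_j)$.

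\textbf{Conclusion.} Each such matching contributes weight exactly $d_id_j$, since every other $v$-vertex is matched. Summing over pairs gives $\sum_{i<j}d_id_j\,d_T(v_i,v_j)=K'(T)$, using $r_T=d_T$ on trees.

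\textbf{Main obstacle.} The only delicate step is the existence and uniqueness claim for the matching. I would carry it out by induction on the number of vertices, removing a pendant vertex $v$ of $T$ that is distinct from $v_i$ and $v_j$ (one exists when $n\ge3$). That vertex is then forced to match its unique neighbouring $e$-vertex $f^*$. If $f^*=e^*$, then $v$ cannot be matched at all, so no matching exists; this is consistent, because a pendant edge at a vertex other than $v_i,v_j$ is not on the path. Deleting $v$ and $f^*$ reduces to $T-v$, where the path between $v_i$ and $v_j$ is unchanged. The base case is $T=K_2$, where the count is $1=d_T(v_1,v_2)$.
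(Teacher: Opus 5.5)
Your proof is correct in substance, but it takes a different route from the paper. The paper obtains this statement only as the specialization $\tau(T)=1$, $\mathcal C(T)=\varnothing$, $x_i=d_T(v_i)$ of Theorem~\ref{tm2.10}.

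Your first step is the paper's coefficient-extraction step: multiplying by $\prod_k d_k$ leaves exactly the factor $d_id_j$ for the two unmatched $v$-vertices. The routes differ in how the key identity $m(S(T)-v_i-v_j,n-2)=\tau(T)r_T(v_i,v_j)$ is established.

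\begin{itemize}
\item The paper argues algebraically. It writes the right side as $\det L_T(v_i,v_j)$ via Lemma~\ref{lem2.3}. It then expands by Leibniz, splitting each diagonal entry into edge choices, and uses a sign-reversing involution that cancels transpositions $(u\,v)$ against pairs of fixed points both choosing $uv$. In a tree no cycle terms survive.
\item You argue bijectively that $m(S(T)-v_i-v_j,n-2)=d_T(v_i,v_j)$. The lone unmatched $e$-vertex must lie on the $v_i$--$v_j$ path, and each such choice has a unique completion. You then use $r_T=d_T$.
\end{itemize}

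Your argument is elementary and self-contained, needs no matrix-tree or determinantal input, and shows explicitly which matchings contribute. The paper's argument works uniformly for all connected graphs. There the matching count differs from $\tau(G)r_G(v_i,v_j)$ by cycle terms, which the determinant produces automatically as the $(-2)^{p(C)}$ corrections and which a path-based bijection cannot detect.

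Two slips need fixing.

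First, in your induction, the claim that a pendant vertex outside $\{v_i,v_j\}$ exists whenever $n\ge 3$ is false when $T$ is itself the $v_i$--$v_j$ path (e.g.\ $P_3$ with $v_i,v_j$ its ends). Make that case the base case. There, each of the $d_T(v_i,v_j)$ edges splits $T$ into two subdivided paths with one end deleted, and each has a forced perfect matching. Alternatively, rely on your component argument: leaf-peeling always works there because each component contains only one deleted vertex.

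Second, your description of the forced matching is reversed as worded. A leaf $b\ne v_i$ with parent $a$ forces $(ab)^*$ to be matched to $b$, not toward the parent. Likewise, the edge of the $r$--$v_i$ path ending at $v_i$ cannot be matched toward $v_i$. The correct rule is that each $v$-vertex $u\ne v_i$ in the component is matched with $f^*$, where $f$ is the first edge of the $u$--$v_i$ path. Equivalently, every $e$-vertex is matched to its endpoint farther from $v_i$.
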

Recently, Li, Li and Yan \cite{sli} generalized the above result to weighted Kirchhoff index of trees.
\begin{thm}\cite{sli}\label{tm2.8}
Let \(T\) be a vertex-weighted tree with vertex set $V(T)= \{v_1,v_2,\ldots,v_n\}$ satisfying $\omega(v_i)=x_i$ for each $v_i \in V(T)$. The weighted Kirchhoff  index (weighted Wiener index) of $T$ can be expressed as
\begin{equation*}
K(T;x_1,x_2,\ldots,x_n)=m\left(S(T)^{\omega^\ast},n-2\right)\prod_{i=1}^{n}x_i,
\end{equation*}
\end{thm}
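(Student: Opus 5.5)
The plan is to reduce the statement to a bijective count on the tree. Multiplying through by $\prod_{i=1}^n x_i$, every matching $M\in\mathcal M_{n-2}(S(T))$ has weight $\omega^\ast(M)=\prod_{v_ie^\ast\in M}1/x_i$. Hence $m(S(T)^{\omega^\ast},n-2)\prod_i x_i=\sum_M \prod_{v_i\notin V(M)} x_i$, where $V(M)$ is the set of vertices covered by $M$. The identity therefore follows once we show the following. The map sending a matching of size $n-2$ to the pair of $v$-vertices it leaves uncovered is well defined. Moreover, for each pair $\{v_i,v_j\}$ the number of matchings with that pair uncovered equals $d_T(v_i,v_j)=r_T(v_i,v_j)$. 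Then $\sum_M\prod_{v\notin V(M)}\omega(v)=\sum_{i<j}x_ix_jd_T(v_i,v_j)=K(T;x_1,\dots,x_n)$.

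\textbf{Well-definedness.} $S(T)$ is bipartite, with $n$ $v$-vertices on one side and $n-1$ $e$-vertices on the other, and every edge of $S(T)$ joins the two sides. A matching of size $n-2$ therefore covers exactly $n-2$ $v$-vertices, leaving exactly two uncovered, say $v_i$ and $v_j$. It also covers $n-2$ of the $n-1$ $e$-vertices, leaving exactly one, say $f^\ast$, uncovered.

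\textbf{Counting for a fixed pair.} Fix $v_i,v_j$. We classify the matchings by the uncovered $e$-vertex $f^\ast$. Deleting the edge $f$ from $T$ gives two components, each a tree $T'$ with $k$ vertices and $k-1$ edges. Within the component, $M$ restricts to a matching of $S(T')$ that covers all $k-1$ $e$-vertices and all but one $v$-vertex. A standard fact is that for a tree $T'$ and a chosen root $r$, there is exactly one such matching missing $r$: each edge's $e$-vertex is matched to its endpoint farther from $r$. Conversely, if there are two uncovered $v$-vertices in a component, or none, the counts $k-1$ versus $k$ fail to match. Thus a matching exists for $f$ exactly when $v_i$ and $v_j$ lie in different components of $T-f$, that is, when $f$ lies on the $v_i$–$v_j$ path, and in that case the matching is unique. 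The number of matchings is therefore the number of edges on the path, namely $d_T(v_i,v_j)$.

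\textbf{Main obstacle and alternative.} The only delicate step is the uniqueness claim for matchings of $S(T')$ missing a prescribed root. I would prove it by induction on leaves: a leaf $u\neq r$ must be matched to its unique neighbour $e^\ast$. Removing $u$ and $e^\ast$ then reduces to the smaller tree $T'-u$ with the same root $r$. Alternatively, one can note that $\det L_T(v_i,v_j)$ appears through Lemma \ref{lem2.3} with $\tau(T)=1$, but the direct bijection is cleaner.
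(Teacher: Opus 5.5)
Your proof is correct, but it takes a different route from the paper. The paper gives no separate proof of this theorem. It obtains it as the special case $\tau(T)=1$, $\mathcal C(T)=\varnothing$ of Theorem \ref{tm2.10}, so there the tree case rests on Lemma \ref{lem2.3}, $r_T(v_i,v_j)=\det L_T(v_i,v_j)$, combined with a Leibniz expansion of $\det L_T(v_i,v_j)$. In that expansion each diagonal entry $d_T(u)$ is split into edge choices, and each transposition $(u\,v)$ cancels against the term where $u$ and $v$ are fixed points both choosing $uv$. The surviving terms are exactly the matchings of size $n-2$ in $S(T)-v_i-v_j$.

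The last step is the same in both arguments: you group the matchings of $S(T)^{\omega^\ast}$ by their two uncovered $v$-vertices and read off the coefficient of $x_ix_j$. The difference lies in how you evaluate $m(S(T)-v_i-v_j,n-2)$. You avoid determinants entirely and use $r_T=d_T$. Your bijection sends each such matching to its unique uncovered $e$-vertex $f^\ast$, which identifies it with the edge $f$ on the $v_i$--$v_j$ path, using the uniqueness of the root-avoiding matchings in the two subtrees of $T-f$.

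Your route is elementary, self-contained and genuinely explanatory: each matching accounts for one unit of distance. Its limitation is that it uses acyclicity essentially, both for $r_T=d_T$ and for the existence and uniqueness of the rooted matchings, so it does not extend beyond trees. In a graph with cycles, the distinct-edge choice functions counted by $m(S(G)-v_i-v_j,n-2)$ can close up into cycles. The paper's determinant argument handles this uniformly: permutation cycles of length at least $3$ produce the correction terms $(-2)^{p(C)}m(S(G)-S(C)-v_i-v_j,n-2-|C|)$, which vanish precisely in the tree case.
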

Clearly, Theorem \ref{tm2.8} generalizes Theorems \ref{tm2.4} and \ref{tm2.7}.  They further gave a combinatorial explanation of the weighted Kirchhoff index of  unicyclic graphs.

\begin{thm}\cite{sli}\label{tm2.9}
Let \(G\) be a connected vertex-weighted unicyclic graph with $V(G)= \{v_1,v_2,\ldots,v_n\}$ satisfying $\omega(v_i)=x_i$ for each $v_i \in V(G)$, and suppose that $G$ has a cycle $C_k$ with $k$ vertices. Then the weighted Kirchhoff index $K(G;x_1,x_2,\ldots,x_n)$ can be expressed as
\begin{equation*}\label{eq:weighted-unicyclic}
\begin{aligned}
&K(G;x_1,x_2,\ldots,x_n)
=\\
&\begin{cases}
\displaystyle
\frac{1}{k}
\left[
\begin{aligned}
 m\left(S(G)^{\omega^\ast},n-2\right)\prod_{i=1}^{n}x_i-2m\left(S(G)^{\omega^\ast}-C_{2k},n-k-2\right)
\prod_{v_i\notin V(C_k)}x_i
\end{aligned}
\right],
& \text{if } 3\le k\le n-2, \\[7mm]
\displaystyle
\frac{1}{k}
m\left(S(G)^{\omega^\ast},n-2\right)\prod_{i=1}^{n}x_i,
& \text{if } k=n-1 \text{ or } k=n.
\end{cases}
\end{aligned}
\end{equation*}
\end{thm}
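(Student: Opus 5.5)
The plan is to reduce the identity to a statement about each pair of vertices, and then prove that statement by a bijective count of rooted spanning forests of $G$. Since $G$ is unicyclic, $\tau(G)=k$. So by Lemma \ref{lem2.3} we have
$$kK(G;x_1,\ldots,x_n)=\sum_{a<b}x_ax_b\det L_G(v_a,v_b).$$
I will therefore expand both matching terms on the right-hand side as $\sum_{a<b}x_ax_b(\cdot)$ and compare coefficients.

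\textbf{Step 1 (expanding the weighted matching sums).} The graph $S(G)$ has $n$ $v$-vertices and $n$ $e$-vertices, because $|E(G)|=n$. Every edge of $S(G)$ joins a $v$-vertex to an $e$-vertex. Hence a matching $M$ of size $n-2$ misses exactly two $v$-vertices, say $v_a$ and $v_b$, and its weight is $\prod_{i\ne a,b}1/x_i$. Therefore
$$m\big(S(G)^{\omega^\ast},n-2\big)\prod_i x_i=\sum_{a<b}x_ax_b\,N(a,b),$$
where $N(a,b)$ is the number of injective maps $f:V\setminus\{v_a,v_b\}\to E(G)$ with $v\in f(v)$.

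The same argument applies to $S(G)^{\omega^\ast}-C_{2k}$. This graph has $n-k$ $v$-vertices, and it keeps every $e$-vertex of a non-cycle edge, including those of edges with one endpoint on $C_k$. It gives $\sum_{a<b,\ v_a,v_b\notin C_k}x_ax_b\,N'(a,b)$, where $N'(a,b)$ counts injective maps $f:V\setminus(V(C_k)\cup\{v_a,v_b\})\to E(G)\setminus E(C_k)$ with $v\in f(v)$. If $k\ge n-1$, at most one $v$-vertex survives, so no matching of size $n-k-2\ge 0$ can miss two $v$-vertices unless the term is empty or vanishes. This explains the second case of the theorem, and it also agrees with the first formula when $N'\equiv 0$.

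\textbf{Step 2 (the target per-pair identity).} It remains to prove, for each pair $a<b$,
$$\det L_G(v_a,v_b)=N(a,b)-2N'(a,b),$$
with $N'(a,b):=0$ whenever $v_a$ or $v_b$ lies on $C_k$.

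For the left-hand side, write $L_G(v_a,v_b)=B'B'^{T}$, where $B'$ is an oriented incidence matrix with rows $v_a,v_b$ deleted. By Cauchy--Binet, $\det L_G(v_a,v_b)$ equals the number of spanning forests of $G$ with two components that separate $v_a$ from $v_b$.

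For the right-hand side, classify an injective assignment $f$ by its image $F$, which is a set of $n-2$ edges. Every vertex of $V\setminus\{v_a,v_b\}$ owns a distinct edge of $F$. So a component of $(V,F)$ avoiding $v_a,v_b$ has at least as many edges as vertices, while a component containing $v_a$ or $v_b$ has at least (vertices $-1$) edges. The total is $n-2$ edges, all these inequalities are tight, and $G$ has only the cycle $C_k$. Consequently there are exactly two kinds of $F$:
\begin{itemize}
\item a $2$-forest separating $v_a,v_b$, which admits exactly one assignment (orient each edge towards the root $v_a$ or $v_b$);
\item a three-component subgraph consisting of a tree containing $v_a$, a tree containing $v_b$, and a unicyclic component containing $C_k$ and avoiding $v_a,v_b$. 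This admits exactly two assignments, one for each orientation of the cycle, with tree edges oriented towards the cycle.
\end{itemize}

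\textbf{Step 3 (identifying the correction term).} The final step is to show that the number of subgraphs $F$ of the second kind equals $N'(a,b)$. The bijection sends $f$ to its restriction to vertices off $C_k$. For a vertex $w\notin C_k$ adjacent to a cycle vertex $c$, assigning $w$ the edge $wc$ is exactly the "point toward the cycle" option, and this is why those $e$-vertices are retained in $S(G)-S(C_k)$. Conversely, such an assignment extends uniquely to $F$ once $E(C_k)$ is added. If $v_a$ or $v_b$ is on $C_k$, no cycle component can avoid them, so both sides vanish.

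Combining the steps gives $N=\#\{2\text{-forests}\}+2N'$. This is the identity of Step 2, and summing it against $x_ax_b$ and dividing by $k$ gives the theorem.

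I expect the main obstacle to be the careful edge/vertex counting in Step 2. This is the argument that every non-forest image $F$ has precisely the three-component shape above, with the unique cycle contained in a component free of $v_a,v_b$.
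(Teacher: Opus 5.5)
Your proof is correct, but it proves the per-pair identity $\det L_G(v_a,v_b)=N(a,b)-2N'(a,b)$ by a different route from the paper.

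The paper obtains this statement as the unicyclic case of Theorem~\ref{tm2.10}, with $\mathcal C(G)=\{C_k\}$, $p(C_k)=1$, $\tau(G)=k$ and $S(C_k)=C_{2k}$. There the per-pair identity comes from a Leibniz expansion of $\det L_G(v_a,v_b)$:
\begin{itemize}
\item each diagonal entry $d_G(u)$ is expanded as a choice of an incident edge;
\item a sign-reversing involution cancels every transposition $(u\,v)$ against the terms in which the fixed points $u,v$ both choose $uv$;
\item the surviving identity-permutation terms are exactly the injective edge choices counted by your $N(a,b)$;
\item the permutations whose long cycle is an orientation of $C_k$ contribute $-N'(a,b)$ for each of the two orientations, giving $-2N'(a,b)$.
\end{itemize}

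You instead use Cauchy--Binet (the all-minors matrix-tree theorem) to read $\det L_G(v_a,v_b)$ as the number of $2$-forests separating $v_a$ and $v_b$. You then classify injective assignments by their images. These are either separating $2$-forests, with one assignment each, or such forests together with a unicyclic component containing $C_k$, with two assignments each. The latter are in bijection with the objects counted by $N'(a,b)$.

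\textbf{What your route buys.} It gives a sign-free, bijective explanation: $N(a,b)$ overcounts the separating $2$-forests by exactly the cycle-rooted configurations, each counted twice. This makes the correction term $-2N'(a,b)$ transparent.

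\textbf{What the paper's route buys.} It is more general. In a graph with several cycles, the images can contain several unicyclic components, weighted by $2^{\#\text{cycles}}$. To recover the $(-2)^{p(C)}$ sum of Theorem~\ref{tm2.10} from your approach, you would need an extra inclusion--exclusion over sets of disjoint cycles. The involution produces that signed sum directly from the permutation cycles.

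\textbf{Two small presentation points.}
\begin{itemize}
\item In Step~2, say explicitly that $v_a$ and $v_b$ cannot lie in one component of $(V,F)$, because the lower bounds would then already sum to $n-1>n-2$.
\item Your remark on $k\ge n-1$ is muddled, since there $n-k-2<0$. The correction term vanishes by the convention $m(\cdot,j)=0$ for $j<0$; equivalently, no pair $a<b$ lies off the cycle, so $N'\equiv 0$ and the second case follows from Step~2 directly.
\end{itemize}
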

In addition,  they  proposed the following natural question.
\begin{ques}\cite{sli}
Give a combinatorial explanation of the weighted Kirchhoff index of a general graph G by using the matchings of $S(G)^{\omega^\ast}$ and its subgraphs.
\end{ques}
In this paper, we answer this question affirmatively as follows and the proof will be given in Section \ref{section3}.
\begin{thm}\label{tm2.10}
Let $G$ be a connected vertex-weighted graph with vertex set $V(G)=\{v_1,v_2,\ldots,v_n\}$ satisfying $\omega(v_i)=x_i$ for each $v_i \in V(G)$.  Then the weighted Kirchhoff index of $G$ satisfies
\begin{equation}
\begin{aligned}K(G;x_1,x_2,\ldots,x_n)&= \frac{1}{\tau(G)}\Bigg[m(S(G)^{\omega^\ast},n-2)\prod_{i=1}^n x_i \\
&\qquad + \sum_{C\in\mathcal C(G)}(-2)^{p(C)}m(S(G)^{\omega^\ast}-S(C),n-2-|C|)\prod_{v_i\notin V(C)}x_i\Bigg].
\end{aligned}
\end{equation}
\end{thm}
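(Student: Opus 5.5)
Using Lemma \ref{lem2.3}, $\tau(G)\,K(G;x_1,\ldots,x_n)=\sum_{i<j}x_ix_j\det L_G(v_i,v_j)$, so the claim reduces to the polynomial identity
\begin{equation*}
\sum_{i<j}x_ix_j\det L_G(v_i,v_j)=m(S(G)^{\omega^\ast},n-2)\prod_{i=1}^n x_i+\sum_{C\in\mathcal C(G)}(-2)^{p(C)}m(S(G)^{\omega^\ast}-S(C),n-2-|C|)\prod_{v_i\notin V(C)}x_i .
\end{equation*}
Both sides are polynomials in $x_1,\ldots,x_n$: each edge of a matching in $S(G)^{\omega^\ast}$ carries a factor $1/x_i$, and the $v$-vertices it covers are distinct. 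So I will compare coefficients monomial by monomial. Fix a set $U\subseteq V(G)$ of $v$-vertices. I will gather the terms in which the product $\prod x_i$ over the $v$-vertices \emph{not} covered by the matching survives.

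On the right, a matching of size $n-2$ in $S(G)^{\omega^\ast}$ that covers the $v$-vertex set $W$ has weight $\prod_{v_i\in W}x_i^{-1}$. Multiplied by $\prod_i x_i$, it contributes the monomial $\prod_{v_i\notin W}x_i$. Since the matching has $n-2$ edges, each using a distinct $v$-vertex, $|W|=n-2$. The first term is therefore $\sum_{i<j}x_ix_j\,N_{ij}$, where $N_{ij}$ is the number of matchings of $S(G)$ that saturate $V(G)\setminus\{v_i,v_j\}$. Similarly, in $S(G)-S(C)$ a matching of size $n-2-|C|$ saturates all but two of the $n-|C|$ remaining $v$-vertices. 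Multiplied by $\prod_{v_i\notin V(C)}x_i$, it again gives $x_ix_j$ with $v_i,v_j\notin V(C)$. Hence the right side equals $\sum_{i<j}x_ix_j\,R_{ij}$, where
\begin{equation*}
R_{ij}=N_{ij}+\sum_{C\in\mathcal C(G-v_i-v_j)}(-2)^{p(C)}\,\#\{\text{matchings of } S(G)-S(C)\text{ saturating }V(G)\setminus(V(C)\cup\{v_i,v_j\})\}.
\end{equation*}
Here $C\in\mathcal C(G)$ avoids $v_i,v_j$ exactly when $C\in\mathcal C(G-v_i-v_j)$. The whole proof then rests on showing $R_{ij}=\det L_G(v_i,v_j)$ for every pair. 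This is an unweighted statement.

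To prove $R_{ij}=\det L_G(v_i,v_j)$, I will expand $\det(D-A)$ restricted to $V'=V(G)\setminus\{v_i,v_j\}$ as a sum over permutations $\sigma$ of $V'$.
\begin{itemize}
\item Each fixed point of $\sigma$ picks a diagonal degree entry $d_G(v)$, which counts the edges $ve^*$ of $S(G)$, i.e.\ a choice of $e$-vertex for $v$.
\item Each nontrivial cycle of $\sigma$ picks $-1$ entries along a cycle of $G$.
\item Transpositions correspond to an edge $uv$ chosen in both directions. A transposition contributes $(-1)\cdot(-1)^2=-1$ relative to sign.
\end{itemize}
The standard sign-reversing-involution argument, as in the proof of Theorem \ref{tm2.6} in \cite{lqu}, then gives
\begin{equation*}
\det L_G(v_i,v_j)=\sum_{C}(-2)^{p(C)}\,\#\{\text{matchings of }S(G)-S(C)\text{ saturating }V'\setminus V(C)\},
\end{equation*}
with $C$ ranging over $\emptyset$ and $\mathcal C(G-v_i-v_j)$. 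The pieces of this argument are as follows.
\begin{itemize}
\item The empty $C$ gives the diagonal product. A choice of one incident edge for each vertex is counted by assignments $v\mapsto e^*$.
\item Non-injective assignments, where two vertices choose the same $e^*$, cancel exactly against the transposition terms.
\item Cycles of length $\ge3$ arise with both orientations and sign $-1$ each, giving the factor $-2$ per cycle.
\item The remaining diagonal choices, restricted to $G-V(C)$ minus the edges of $C$, must avoid edges of $C$. Here I need care: deleting $S(C)$ removes the $e$-vertices of $E(C)$, and the leftover injective assignments are exactly the matchings saturating $V'\setminus V(C)$.
\end{itemize}
This is precisely the combinatorial content already used for Theorem \ref{tm2.6}, applied to the principal submatrix $L_G(v_i,v_j)$ rather than summed. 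Equivalently, the proof of Theorem \ref{tm2.6} establishes the identity for each pair before summing, and I would re-derive it in that form.

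The main obstacle is this per-pair identity, and in particular the cancellation between non-injective assignments and $2$-cycles. It needs a careful involution: pair each "collision" at an $e$-vertex $e^*=uv$ (both $u$ and $v$ choosing $e$) with the transposition $(u\,v)$ picking off-diagonal entries. The sign bookkeeping must match, including the interaction with longer cycles. Once the per-pair equality $R_{ij}=\det L_G(v_i,v_j)$ holds, summing against $x_ix_j$ and dividing by $\tau(G)$ via Lemma \ref{lem2.3} gives Theorem \ref{tm2.10} immediately.
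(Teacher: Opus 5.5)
Your proposal is correct and follows essentially the same route as the paper. You reduce via Lemma~\ref{lem2.3} and comparison of the coefficients of $x_ix_j$ to the per-pair identity for $\det L_G(v_i,v_j)$. You then prove that identity by the Leibniz expansion, with each diagonal degree expanded as a choice of incident edge. Transpositions $(u\,v)$ cancel against pairs of fixed points both choosing $uv$, and the surviving injective assignments are read off as matchings of $S(G)-S(C)-v_i-v_j$.

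The involution you flag as the main obstacle is made well defined in the paper by fixing a linear order on edges and toggling only the smallest ``bad'' edge (a transposition, or two fixed points both choosing that edge). This leaves all permutation cycles of length at least $3$ untouched, so there is no interaction with longer cycles to worry about.
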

It can be seen that Theorem  \ref{tm2.10} is a generalization of Theorems \ref{tm2.5}-\ref{tm2.9}. On the other hand, by taking \(x_i=d_G(v_i)\), we obtain the following corollary.
\begin{cor}
Let $G$ be a connected graph with vertex set $V(G)= \{v_1,v_2,\ldots,v_n\}$.  Then the degree Kirchhoff index of $G$ satisfies
\begin{equation*}
\begin{aligned}K'(G)&= \frac{1}{\tau(G)}\Bigg[m(S(G)^{\omega'},n-2)\prod_{i=1}^n d_G(v_i) \\
&\qquad + \sum_{C\in\mathcal C(G)}(-2)^{p(C)}m(S(G)^{\omega'}-S(C),n-2-|C|)\prod_{v_i\notin V(C)}d_G(v_i)\Bigg].
\end{aligned}
\end{equation*}
\end{cor}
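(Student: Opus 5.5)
We will multiply both sides by $\tau(G)$ and use Lemma \ref{lem2.3}, so the claim becomes
$$\sum_{i<j}x_ix_j\det L_G(v_i,v_j)=m(S(G)^{\omega^\ast},n-2)\prod_{i}x_i+\sum_{C\in\mathcal C(G)}(-2)^{p(C)}m(S(G)^{\omega^\ast}-S(C),n-2-|C|)\prod_{v_i\notin V(C)}x_i .$$
The left-hand side is the coefficient of $t^{2}$, up to sign, in a characteristic-type polynomial. Put $X=\mathrm{diag}(x_1,\dots,x_n)$ and consider $\det(L_G+tX)$ or, more conveniently, $\det(tI+X^{-1}L_G)\cdot\prod x_i$. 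Expanding by principal minors, the coefficient of $\prod_{k\ne i,j}$-type terms gives $\sum_{S}\det L_G[S]\prod_{v\notin S}x_v$, summed over principal submatrices. The terms with $|S|=n-2$ are exactly $\sum_{i<j}x_ix_j\det L_G(v_i,v_j)$. So the left-hand side is the coefficient of $t^2$ in $\det(L_G+tX)$.

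The second step is to compute each minor $\det L_G[S]$ for $|S|=n-2$ combinatorially, in the style of Que and Chen's proof of Theorem \ref{tm2.6}. Write $L_G=D_G-A_G$ and expand the determinant of the principal submatrix on $S=V\setminus\{v_i,v_j\}$ by permutations. Each permutation decomposes into fixed points, 2-cycles and longer cycles. A fixed point at $v$ contributes $d_G(v)$, which is interpreted as choosing one incident edge $e$ of $v$; this corresponds to an edge $ve^\ast$ of $S(G)$. A 2-cycle $(u\,v)$ contributes $-1$ times the sign $-1$, i.e. $+1$, but it cancels against the fixed-point choices at $u$ and $v$ that pick the same edge $uv$ twice, since $e^\ast$ cannot be matched twice. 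A cycle of length at least $3$ contributes $-1$ (sign $(-1)^{k-1}$ times $(-1)^k$), and each cycle of $G$ occurs twice, once per orientation. This yields the identity
$$\det L_G[S]=m(S(G)[S\cup E^\ast],|S|)+\sum_{C\subseteq G[S]}(-2)^{p(C)}m(\cdots-S(C),|S|-|C|),$$
where the matchings are the ones saturating all $v$-vertices of $S$. Equivalently, this is Que and Chen's lemma, which we would take from the proof of Theorem \ref{tm2.6}.

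The third step assembles the weights. A matching $M$ of $S(G)$ with $n-2$ edges, whose $v$-vertices are all except $v_i,v_j$, has $\omega^\ast(M)\prod_k x_k=x_ix_j$, because each matched $v_k$ contributes $1/x_k$. Hence summing $x_ix_j\cdot(\#\text{such matchings})$ over all pairs gives $m(S(G)^{\omega^\ast},n-2)\prod x_k$. Here we must check that every $(n-2)$-matching of $S(G)$ covers exactly $n-2$ $v$-vertices, which holds because each edge of $S(G)$ has exactly one $v$-endpoint. Similarly, an $(n-2-|C|)$-matching in $S(G)-S(C)$ covers all but two vertices of $V\setminus V(C)$, say $v_i,v_j$. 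Its weight times $\prod_{v_k\notin V(C)}x_k$ equals $x_ix_j$, and this matches the cycle term of $\det L_G(v_i,v_j)$ for each $C$ avoiding $v_i,v_j$. Interchanging the sums over pairs and over $C$ gives the formula.

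The main obstacle is the second step, namely the careful sign and cancellation bookkeeping. In particular, we must show that 2-cycles exactly cancel the "double use" of an edge, and we must handle cycles avoiding the deleted vertices properly. If this argument is not available from \cite{lqu}, we would prove it by writing $\det L_G[S]$ via $L_G=BB^T$, with $B$ the oriented incidence matrix, and applying Cauchy–Binet together with a sign-reversing involution.
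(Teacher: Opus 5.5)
Your route is the paper's. The corollary is Theorem~\ref{tm2.10} with $x_i=d_G(v_i)$, so $\omega^\ast=\omega'$ and $K(G;d_G(v_1),\ldots,d_G(v_n))=K'(G)$; you should state this specialization explicitly. You prove that theorem the way the paper does. First comes a per-pair Leibniz expansion of $\det L_G(v_i,v_j)$. Then you observe that a matching of $S(G)-S(C)$ with $n-2-|C|$ edges missing $v_i,v_j$ has $\omega^\ast$-weight times $\prod_{v_k\notin V(C)}x_k$ equal to $x_ix_j$; the paper phrases this as comparing coefficients of $x_ix_j$. Your Step~1 with $\det(L_G+tX)$ is never used afterwards and can be dropped.

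The step you single out as the crux contains a sign error. Relative to the common factor from the other vertices, a transposition $(u\,v)$ contributes $\operatorname{sgn}(u\,v)\,(L_G)_{uv}(L_G)_{vu}=(-1)\cdot(-1)(-1)=-1$, not $+1$. The term with $u,v$ fixed and both choosing the edge $uv$ contributes $+1$, so with your value the two would add, not cancel.

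The cancellation must also be a well-defined involution on fully expanded terms, because one term can contain several transpositions and several doubly chosen edges at once. The paper fixes an order on edges and toggles the smallest ``bad'' edge. This leaves the set of bad edges unchanged and is therefore an involution. Theorem~\ref{tm2.6} as stated only gives the identity summed over all pairs, so it does not supply the per-pair identity; the paper proves that itself (Claim~\ref{claim1}).

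Incidentally, your Cauchy--Binet fallback needs no involution. Write $\det L_G(v_i,v_j)=\sum_F\det(B[X,F])^2$ with $X=V(G)\setminus\{v_i,v_j\}$, and expand the square over pairs of incidence bijections $\sigma,\sigma'\colon X\to F$. Then:
\begin{itemize}
\item the permutation $\sigma'^{-1}\sigma$ of $X$ has no $2$-cycles, since one on $\{x,y\}$ would force $\sigma(x)=\sigma(y)=xy$;
\item its cycles of length at least $3$ are oriented cycles of $G$, each contributing $-1$;
\item on its fixed points, $\sigma=\sigma'$ is an injective choice of incident edges avoiding $E(C)$, i.e.\ a matching of $S(G)-S(C)-v_i-v_j$.
\end{itemize}
This yields Claim~\ref{claim1} directly.
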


\section{Proof of Theorem \ref{tm2.10}}\label{section3}
In this section, we give the proof of Theorem \ref{tm2.10}.
\begin{proof}
We first prove the following auxiliary identity. For any two distinct
vertices $v_i,v_j\in V(G)$,
\begin{align}\label{eq3.1}
\tau(G)r_G(v_i,v_j) &=m(S(G)-v_i-v_j,n-2) \notag \\
&+\sum_{\substack{C\in\mathcal C(G)\\ v_i,v_j\notin V(C)}}(-2)^{p(C)}m(S(G)-S(C)-v_i-v_j,n-2-|C|).
\end{align}
For convenience, set \(X=V(G)\setminus\{v_i,v_j\}\). Then \(L_G[X]\) denotes the principal submatrix of \(L_G\) indexed by \(X\); equivalently, \(L_G[X]=L_G(v_i,v_j)\). By Lemma \ref{lem2.3}, we have
\begin{equation}\label{eq3.2}
\det L_G[X]=\tau(G)r_G(v_i,v_j).
\end{equation}

So by Eq.~\eqref{eq3.2}, in order to prove Eq.~\eqref{eq3.1}, it suffices to prove the following claim.
\begin{cla}\label{claim1}
\begin{equation*}
\det L_G[X] =m(S(G)-v_i-v_j,n-2)+\sum_{\substack{C\in\mathcal C(G)\\ V(C)\subseteq X}}(-2)^{p(C)}m(S(G)-S(C)-v_i-v_j,n-2-|C|).
\end{equation*}
\begin{equation*}
\end{equation*}
\end{cla}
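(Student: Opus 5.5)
We expand $\det L_G[X]$ as a sum over permutations of $X$ and group the terms by their cycle structure. The expansion is
\[
\det L_G[X]=\sum_{\sigma}\operatorname{sgn}(\sigma)\prod_{u\in X}(L_G)_{u\sigma(u)},
\]
where $\sigma$ runs over permutations of $X$. A term is nonzero only when every nontrivial cycle of $\sigma$ of length at least $3$ traces a cycle of $G$ inside $X$, every $2$-cycle $(u\,w)$ uses an edge $uw\in E(G[X])$, and every fixed point $u$ contributes $d_G(u)$. Here $d_G(u)$ is the full degree in $G$, so edges from $u$ to $v_i,v_j$ still count.

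We group permutations by the $2$-regular subgraph $C$ traced by their cycles of length at least $3$. Each cycle of $C$ has two orientations, and each oriented cycle of length $k$ contributes sign $(-1)^{k-1}$ times $(-1)^k$ from the off-diagonal entries, i.e.\ $-1$. So $C$ contributes $(-2)^{p(C)}$ times a determinant-like sum over the remaining vertices $Y=X\setminus V(C)$, which uses only fixed points and transpositions. A transposition $(u\,w)$ contributes $-1\cdot(-1)^2=-1$, while a fixed point contributes $d_G(u)$. Hence the remaining sum is
\[
F(Y)=\sum_{N}(-1)^{|N|}\prod_{u\in Y\setminus V(N)}d_G(u),
\]
where $N$ ranges over matchings of $G[Y]$. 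We get
\[
\det L_G[X]=F(X)+\sum_{C\in\mathcal C(G),\,V(C)\subseteq X}(-2)^{p(C)}F(X\setminus V(C)),
\]
which is exactly the shape of Claim \ref{claim1}.

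It remains to prove the key identity $F(Y)=m(H_Y,|Y|)$, where $H_Y$ is $S(G)$ minus $v_i$, $v_j$ and the $v$-vertices and $e$-vertices of the deleted cycles. Concretely, $H_Y$ keeps the $v$-vertices in $Y$ and all $e$-vertices except those of $E(C)$. We need $|Y|=n-2-|C|$, and $m(H_Y,|Y|)$ counts matchings of $H_Y$ saturating every $v$-vertex of $Y$. Such a matching is exactly a choice, for each $u\in Y$, of a distinct incident edge of $G$ not in $E(C)$. Note that an edge of $G$ from $u$ to a vertex of $C$, or to $v_i$ or $v_j$, survives in $H_Y$ as a pendant $e$-vertex attached to $u$.

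The count of these injective assignments equals $F(Y)$ by inclusion--exclusion. Expanding $\prod_{u}d_G(u)$ counts all, possibly non-injective, choices of incident edges. Since $Y$ excludes $V(C)$, no edge chosen at $u\in Y$ lies in $E(C)$, so each $d_G(u)$ is the correct count of available edges. A conflict can only occur when two vertices $u,w\in Y$ both choose the same edge $uw$. Sign-reversing over the set of conflicted edges, each edge of $G[Y]$ counted with $-1$, gives exactly $\sum_N(-1)^{|N|}\prod d_G$. The conflicted set is automatically a matching, because each vertex picks one edge.

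Finally, $S(G)-S(C)-v_i-v_j$ as written in the claim should be interpreted as removing the $v$-vertices and $e$-vertices of $C$ together with $v_i,v_j$, which is the graph $H_Y$. Substituting $F(Y)=m(H_Y,|Y|)$ into the grouped expansion finishes the claim. The main obstacle is the bookkeeping in this inclusion--exclusion step: checking that non-matching conflict sets cannot occur, and that the signs $(-1)^{|N|}$ agree with the transposition signs from the determinant.
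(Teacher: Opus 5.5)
Your proposal is correct and follows essentially the same argument as the paper. Both proofs use the Leibniz expansion and let each $2$-regular subgraph $C$ traced by the permutation cycles of length at least $3$ contribute $(-2)^{p(C)}$. Both cancel transpositions $(u\,w)$ against fixed points $u,w$ that both choose the edge $uw$, and both biject the surviving injective edge choices with the matchings of $S(G)-S(C)-v_i-v_j$ that saturate all $v$-vertices.

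The only difference is in how the cancellation is packaged. The paper expands the diagonal entries first and cancels via an explicit global sign-reversing involution (toggling the smallest ``bad'' edge), and only then groups by $C$. You factor out $C$ first and obtain the same cancellation as the inclusion--exclusion identity $F(Y)=\sum_N(-1)^{|N|}\prod_{u\in Y\setminus V(N)}d_G(u)$, which is a slightly cleaner bookkeeping of the same idea.
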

\emph{Proof of Claim 1.}  By the Leibniz formula for the determinant, we have
\begin{equation}\label{eq3.3}
\det L_G[X]=\sum_{\pi\in\mathfrak S_X}\operatorname{sgn}(\pi)\prod_{x\in X} (L_G)_{x,\pi(x)}.
\end{equation}
Here \(\mathfrak S_X\) denotes the set of all permutations of \(X\). For \(\pi\in\mathfrak S_X\), \(\operatorname{sgn}(\pi)\) denotes the sign of the permutation \(\pi\).

For each \(\pi\in\mathfrak S_X\), let \(\mathcal C(\pi)\) be the contribution of \(\pi\) to the Leibniz expansion of \(\det L_G[X]\), namely
\begin{equation*}
\mathcal C(\pi)=\operatorname{sgn}(\pi)\prod_{x\in X}(L_G)_{x,\pi(x)}.
\end{equation*}
Therefore, we have
\begin{equation}\label{eq3.4}
\det L_G[X]=\sum_{\pi\in\mathfrak S_X}\mathcal C(\pi).
\end{equation}
Note that each permutation $\pi$ can be decomposed into disjoint permutation cycles, and each permutation cycle is of one of the following three types:
\begin{itemize}
    \item a fixed point: \(\pi(u)=u\);
    \item a transposition \((u\,v)\): \(\pi(u)=v,\ \pi(v)=u\);
    \item a permutation cycle of length at least \(3\):
    \[
    u_1\mapsto u_2\mapsto\cdots\mapsto u_\ell\mapsto u_1,
    \qquad \ell\ge3.
    \]
\end{itemize}

For a fixed point \(u\) of \(\pi\) (i.e., $\pi(u)=u$), the factor corresponding to \(u\) in the product $\prod_{x\in X}(L_G)_{x,\pi(x)}$ is $d_G(u).$ We expand this diagonal entry as
\begin{equation}\label{eq3.5}
d_G(u)=\sum_{\substack{e\in E(G)\\ u\in V(e)}}1.
\end{equation}
Thus, after expanding all diagonal entries, a fixed point \(u\) may be regarded as choosing one edge incident with \(u\), each such choice having weight \(1\). This way of expanding the diagonal entries will play a crucial role in the following proof.

For a transposition \((u\,v)\) of \(\pi\), the corresponding factors are \((L_G)_{uv}\) and \((L_G)_{vu}\). If \(uv\notin E(G)\), then $(L_G)_{uv}=(L_G)_{vu}=0,$ and hence the corresponding term is zero. Thus we may only consider the case \(uv\in E(G)\).

Suppose \(uv\in E(G)\). Let \(\pi_0\) be the permutation obtained from
\(\pi\) by replacing the transposition \((u\,v)\) with two fixed points
\(u\) and \(v\), while leaving all other cycles unchanged. Thus  $\pi=(u\,v)\circ \pi_0.$

Let \(A\) denote the product of all factors coming from vertices other
than \(u\) and \(v\), i.e.,
\[
A=\prod_{\substack{x\in X\\ x\ne u,v}}(L_G)_{x,\pi_0(x)}.
\]

If \(u\) and \(v\) form a transposition, then, after fixing all choices arising from the expansions of the diagonal entries corresponding to the other fixed points, the associated completely expanded term is
\[
\begin{aligned}
\mathcal C(\pi)
&=\operatorname{sgn}(\pi)\prod_{x\in X}(L_G)_{x,\pi(x)} \\[1mm]
&=\operatorname{sgn}\bigl((u\,v)\circ\pi_0\bigr)(L_G)_{uv}(L_G)_{vu}
\prod_{\substack{x\in X\\x\ne u,v}}(L_G)_{x,\pi_0(x)} \\[1mm]
&=\bigl(-\operatorname{sgn}(\pi_0)\bigr)(-1)(-1)A \\[1mm]
&=-\operatorname{sgn}(\pi_0)A.
\end{aligned}
\]

On the other hand, in the permutation \(\pi_0\), the vertices \(u\) and
\(v\) are fixed. We have
\[
\begin{aligned}
\mathcal C(\pi_0)
&=\operatorname{sgn}(\pi_0)(L_G)_{uu}(L_G)_{vv}
\prod_{\substack{x\in X\\x\ne u,v}}(L_G)_{x,\pi_0(x)} \\[1mm]
&=\operatorname{sgn}(\pi_0)d_G(u)d_G(v)A.
\end{aligned}
\]
By the expansion of the diagonal entries in Eq.~(\ref{eq3.5}), the product \(d_G(u)d_G(v)\) expands into \(d_G(u)d_G(v)\) summands, each having value \(1\). Hence, with all other choices fixed, every corresponding completely expanded term has contribution \(\operatorname{sgn}(\pi_0)A\). Since \(uv\in E(G)\), there is a unique such term in which both \(u\) and \(v\) choose the edge \(uv\). The contribution of this particular term is
\[
\begin{aligned}
\frac{1}{d_G(u)d_G(v)}\mathcal C(\pi_0)
&=\frac{1}{d_G(u)d_G(v)}
\operatorname{sgn}(\pi_0)d_G(u)d_G(v)A \\[1mm]
&=\operatorname{sgn}(\pi_0)A.
\end{aligned}
\]
By the previous computation, this is exactly \(-\mathcal C(\pi)\). Thus, replacing a transposition \((u\,v)\) by two fixed points \(u\) and \(v\), both choosing the edge \(uv\), reverses the sign of the corresponding expanded term.

We now define a global sign-reversing pairing of the expanded terms. Fix a linear order on the vertices, and use it to order the edges lexicographically. Call an edge \(uv\) \emph{bad} in an expanded term if one of the following two conditions holds:
\begin{enumerate}
    \item[(1).] \((u\,v)\) is a transposition of the corresponding permutation;
    \item[(2).] \(u\) and \(v\) are both fixed points of the corresponding permutation, and both choose the edge \(uv\).
\end{enumerate}
For each expanded term containing at least one bad edge, choose the smallest bad edge \(uv\). If \(uv\) appears as a transposition, replace it by two fixed points \(u\) and \(v\), both choosing the edge \(uv\). If \(uv\) appears as two fixed points both choosing \(uv\), replace them by the transposition \((u\,v)\). All other parts are left unchanged.

It is not hard to see that this operation changes only the status of the edge \(uv\), from a transposition to two fixed points both choosing \(uv\), or conversely. No other bad edge is changed or created. Hence the set of bad edges is preserved, and the smallest bad edge is still \(uv\). Therefore applying the operation twice gives back the original expanded term, and no expanded term is paired with itself. Moreover, the two paired expanded terms have opposite signs. Hence all expanded terms containing at least one bad edge cancel in pairs. Consequently, in computing the net contribution, it remains to consider expanded terms with no bad edge. Equivalently, the remaining terms have no transpositions, and no two fixed points choose the same edge of \(G\).

There are two possible cases for the underlying permutation of such an expanded term. Either it has no permutation cycle of length at least \(3\), or it has at least one permutation cycle of length at least \(3\). In the first case, we set \(C=\varnothing\). In the second case, suppose that
\[
u_1\mapsto u_2\mapsto\cdots\mapsto u_\ell\mapsto u_1,
\qquad \ell\ge3,
\]
is such a permutation cycle. The factor contributed by this permutation
cycle is nonzero only if $u_1u_2,\ u_2u_3,\ \ldots,\ u_{\ell-1}u_\ell,\ u_\ell u_1$ are all edges of \(G\). Hence the vertices \(u_1,\ldots,u_\ell\) form a cycle of \(G\). Since distinct permutation cycles are vertex-disjoint, all permutation cycles of length at least \(3\) with nonzero factors together determine a nonempty \(2\)-regular subgraph \(C\in\mathcal C(G)\) such that \(V(C)\subseteq X,\) or equivalently, \(v_i,v_j\notin V(C)\). Conversely, each such \(2\)-regular subgraph \(C\), together with a choice of an orientation for each cycle of \(C\), determines the corresponding permutation cycles of length at least 3.

We now consider separately the case \(C=\varnothing\) and the case \(C\neq\varnothing\) (i.e., \(C\in\mathcal C(G)\) with \(V(C)\subseteq X\)).

If \(C=\varnothing\), then, after all transpositions have been cancelled, the only remaining permutation is the identity permutation on \(X\), denoted by \(\operatorname{id}_X\). Thus every vertex of \(X\) is a fixed point. Its contribution is
\begin{align}
\mathcal C(\mathrm{id_X}) &=\operatorname{sgn}(\mathrm{id_X})\prod_{x\in X}(L_G)_{x,x} =\prod_{x\in X}d_G(x) \notag \\
&=\prod_{x\in X}\Bigg(\sum_{\substack{e\in E(G)\\ x\in V(e)}}1\Bigg) \notag \\
&=\sum_{\mathcal E_X}1, \notag
\end{align}
where
\[
\mathcal E_X=
\left\{
(e_x)_{x\in X}: e_x\in E(G)\text{ and }x\in V(e_x)\text{ for every }x\in X
\right\}.
\]

By the cancellation with transpositions, all summands in which two vertices \(x,y\in X\) choose the same edge \(xy\) are cancelled. Therefore, the remaining elements in $\mathcal E_X$ are precisely those satisfying
\[
e_x\neq e_y\qquad \text{for all distinct }x,y\in X.
\]

Now we consider the graph $S(G)-v_i-v_j$ obtained from the subdivision graph $S(G)$ by deleting vertices $v_i$ and $v_j$. For each \(x\in X\), if \(e_x\) is the edge chosen by \(x\), we choose the subdivision edge $x e_x^\ast$. Since the vertices \(x\in X\) are distinct, no two chosen subdivision edges share a \(v\)-type vertex. Similarly, the chosen edges \(e_x\) are distinct, and no two chosen subdivision edges share an \(e\)-type vertex. Hence these subdivision edges form a matching of size $ |X|=n-2$ in \(S(G)-v_i-v_j\).

Conversely, since \(S(G)-v_i-v_j\) has exactly $(n-2)$ \(v\)-type vertices and every subdivision edge is incident with exactly one \(v\)-type vertex, every matching of size \(n-2\) in \(S(G)-v_i-v_j\) saturates all \(v\)-type vertices in \(X\). It therefore uniquely determines a remaining element \((e_x)_{x\in X}\).

Consequently, the net contribution of the case \(C=\varnothing\) is
\begin{equation}\label{eq3.6}
m(S(G)-v_i-v_j,n-2).
\end{equation}

If \(C\neq\varnothing\), fix a nonempty \(2\)-regular subgraph \(C\) with \(V(C)\subseteq X\). For each choice of orientations of the cycles of \(C\), let \(\pi\) be the permutation whose permutation cycles are exactly these
oriented cycles on \(V(C)\), and note that the remaining vertices in \(X\setminus V(C)\) are fixed points (as transpositions have already been cancelled).  Each oriented cycle of length \(\ell\) contributes
\begin{equation*}
\underbrace{(-1)^{\ell-1}}_{\text{sign of }\ell\text{-cycle}} \cdot
\underbrace{(-1)^\ell}_{\text{\(L_G\)-entries along edges}} = -1.
\end{equation*}
Then, we have
\[
\begin{aligned}
\mathcal C(\pi)&=\operatorname{sgn}(\pi)\prod_{x\in X}(L_G)_{x,\pi(x)}\\
&=(-1)^{p(C)}\prod_{x\in X\setminus V(C)}(L_G)_{x,x}\\
&=(-1)^{p(C)}\prod_{x\in X\setminus V(C)}d_G(x).
\end{aligned}
\]
Since each undirected cycle of \(C\) has two possible orientations, summing over all \(2^{p(C)}\) choices of orientations gives
\[
\begin{aligned}
\sum_{\text{orientations of }C}\mathcal C(\pi)
&=2^{p(C)}(-1)^{p(C)}\prod_{x\in X\setminus V(C)}d_G(x)\\
&=(-2)^{p(C)}\prod_{x\in X\setminus V(C)}d_G(x).
\end{aligned}
\]
For a vertex \(x\in X\setminus V(C)\), there is no edge of \(E(C)\) incident with \(x\). Hence
\[
d_G(x)=\sum_{\substack{e\in E(G)\\  x \in V(e)}}1=\sum_{\substack{e\in E(G)\setminus E(C)\\ x \in V(e)}}1.
\]
Therefore,
\[
\prod_{x\in X\setminus V(C)}d_G(x)=\prod_{x\in X\setminus V(C)}\left(\sum_{\substack{e\in E(G)\setminus E(C)\\ x\in V(e)}}1\right)=\sum_{\mathcal E_{X,C}}1,
\]
where
\[
\mathcal E_{X,C}=\left\{(e_x)_{x\in X\setminus V(C)}:e_x\in E(G)\setminus E(C)\text{ and }x\in V(e_x)\text{ for every }x\in X\setminus V(C)\right\}.
\]

Similarly, by the cancellation with transpositions, all elements in $\mathcal E_{X,C}$ in which two distinct fixed points choose the same edge are removed. So the remaining elements in \(\mathcal E_{X,C}\) are precisely those for which no two distinct vertices choose the same edge, that is,
\[
e_x\neq e_y
\qquad
\text{for all distinct }x,y\in X\setminus V(C).
\]

We now translate these remaining elements into matchings of the graph \(H=S(G)-S(C)-v_i-v_j\). The \(v\)-type vertices of \(H\) are precisely the vertices in \(X\setminus V(C)\). Hence the number of \(v\)-type vertices in \(H\) is
\[
|X\setminus V(C)|=|X|-|C|=n-2-|V(C)|.
\]

Given a remaining element \((e_x)_{x\in X\setminus V(C)}\), choose the subdivision edge \(xe_x^\ast\) for each \(x\in X\setminus V(C)\). Here \(e_x\in E(G)\setminus E(C)\), so the $e$-type vertex \(e_x^\ast\)
remains in \(H\), and hence \(xe_x^\ast\) is indeed an edge of \(H\). The vertices \(x\in X\setminus V(C)\) are distinct, so no two chosen subdivision edges share a \(v\)-type vertex. Moreover, \(e_x\neq e_y\)
whenever \(x\neq y\), and hence no two chosen subdivision edges share an \(e\)-type vertex. Therefore, these chosen subdivision edges form a matching of size \(n-2-|C|\) in \(H\).

Conversely, let \(M\) be a matching of size \(n-2-|C|\) in \(H\). Since \(H\) has exactly \(n-2-|C|\) \(v\)-type vertices, and every edge of \(H\) is incident with exactly one \(v\)-type vertex, the matching
\(M\) must saturate all \(v\)-type vertices of \(H\), namely all vertices in \(X\setminus V(C)\). Thus, for each \(x\in X\setminus V(C)\), there is a unique edge of \(M\) incident with \(x\). In the subdivision graph, this edge must be of the form \(x e^*\), where \(e^*\) is the vertex corresponding to an original edge \(e\in E(G)\setminus E(C)\). We then define \(e_x=e\). Since \(M\) is a matching, distinct vertices \(x\) yield distinct edge-vertices \(e^*\), and hence \(e_x\neq e_y\) whenever \(x\neq y\). Therefore \(M\) uniquely determines a remaining element $(e_x)_{x\in X\setminus V(C)}\in \mathcal E_{X,C}.$

Therefore, for this fixed nonempty \(2\)-regular subgraph \(C\), the net contribution is
\begin{equation}\label{eq3.7}
(-2)^{p(C)}
m(S(G)-S(C)-v_i-v_j,n-2-|C|).
\end{equation}
By the Leibniz expansion in Eq.~\eqref{eq3.3}, together with Eq.~\eqref{eq3.4}, the determinant \(\det L_G[X]\) is obtained by summing the contributions of all possible permutation structures. Combining the net contributions computed in Eqs.~\eqref{eq3.6} and \eqref{eq3.7}, and summing over all \(C=\varnothing\) and over all \(C\in\mathcal C(G)\) with
\(V(C)\subseteq X\), we get
\[
\det L_G[X]=m(S(G)-v_i-v_j,n-2)+\sum_{\substack{C\in\mathcal C(G)\\ v_i,v_j\notin V(C)}}(-2)^{p(C)}m(S(G)-S(C)-v_i-v_j,n-2-|C|).
\]
Thus, Claim \ref{claim1} is proved.

Now let
\begin{align}
\Phi(x_1,\ldots,x_n)=&m(S(G)^{\omega^\ast},n-2)\prod_{i=1}^n x_i  \notag \\
&+\sum_{C\in\mathcal C(G)}(-2)^{p(C)}m(S(G)^{\omega^\ast}-S(C),n-2-|C|)\prod_{v_i\notin V(C)}x_i.\notag
\end{align}

To give the main result, we aim to prove that
\begin{equation}\label{eq3.8}
\Phi(x_1,\ldots,x_n)=\tau(G)\sum_{1\leq i<j\leq n}x_ix_jr_G(v_i,v_j).
\end{equation}
According to the definition of $m(S(G)^{\omega^\ast},k)$, we know that both sides of Eq. (\ref{eq3.8}) are homogeneous square-free polynomials of degree two in \({x_1,\ldots,x_n}\). Hence it is enough to prove that the coefficients of \(x_ix_j\) on both sides are equal for every \(1\le i<j\le n\).

By the definition of  $S(G)^{\omega^\ast}$, for any $1 \leq i < j \leq n$, we have
\begin{equation*}
m(S(G)^{\omega^\ast}-v_i-v_j,n-2)=m(S(G)-v_i-v_j,n-2)\prod_{\substack{k=1\\ k\neq i,j}}^n\frac{1}{x_k}.
\end{equation*}
Note that
\begin{equation*}
m(S(G)^{\omega^\ast},n-2)=\sum_{1 \leq p< q \leq n} m(S(G)^{\omega^\ast}-v_p-v_q,n-2).
\end{equation*}
Therefore, the coefficient of $x_ix_j$ in $m(S(G)^{\omega^\ast},n-2)\prod_{i=1}^n x_i$ is
\begin{equation}\label{eq3.9}
m(S(G)-v_i-v_j,n-2).
\end{equation}

Now fix \(C\in\mathcal C(G)\). If \(v_i\in V(C)\) or \(v_j\in V(C)\), then the factor $\prod_{v_t\notin V(C)}x_t$ does not contain both \(x_i\) and \(x_j\), so this term contributes nothing to the coefficient of \(x_ix_j\).

Assume now that \(v_i,v_j\notin V(C)\). Similarly,
\[
\begin{aligned}
&m(S(G)^{\omega^\ast}-S(C)-v_i-v_j,n-2-|C|) \\
&\qquad =
m(S(G)-S(C)-v_i-v_j,n-2-|C|)
\prod_{\substack{v_k\notin V(C)\\ k\neq i,j}}\frac{1}{x_k}.
\end{aligned}
\]
Also, we have
\[
m(S(G)^{\omega^\ast}-S(C),n-2-|C|)=\sum_{\substack{1\le p<q\le n\\ v_p,v_q\notin V(C)}}m(S(G)^{\omega^\ast}-S(C)-v_p-v_q,n-2-|C|).
\]
Hence the coefficient of \(x_ix_j\) in $m(S(G)^{\omega^\ast}-S(C),n-2-|C|)\prod_{v_i\notin V(C)}x_i$ is
\begin{equation}\label{eq3.10}
m(S(G)-S(C)-v_i-v_j,n-2-|C|).
\end{equation}
Combining the two coefficient computations in Eqs. (\ref{eq3.9}) and (\ref{eq3.10}), we obtain that the coefficient of \(x_i x_j\) in \(\Phi\) is
\[
m(S(G)-v_i-v_j,n-2) + \sum_{\substack{C\in\mathcal C(G)\\ v_i,v_j\notin V(C)}}(-2)^{p(C)}m(S(G)-S(C)-v_i-v_j,n-2-|C|).
\]
By Eq. (\ref{eq3.1}), this coefficient is exactly $\tau(G)r_G(v_i,v_j).$ Hence the coefficients of \(x_i x_j\) on both sides of Eq.~(\ref{eq3.8}) are equal for every \(1\le i<j\le n\). Therefore, Eq.~(\ref{eq3.8}) holds. Consequently,
\[
K(G;x_1,x_2,\ldots,x_n)=\frac{\Phi(x_1,\ldots,x_n)}{\tau(G)}.
\]
This is the desired formula, and the proof is complete.
\end{proof}

\section{Declaration of competing interest}
The authors declare that they have no known competing financial interests or personal relationships that could have appeared to influence the work reported in this paper.
\section{Data availability}
No data was used for the research described in the article.
\section{Acknowledgments}
The second author is supported by the Taishan Scholars Special Project of Shandong Province with the Grant no. tstp202607065 and the National Natural Science Foundation of China with the Grant no. 12171414. The third author is supported by the National Natural Science Foundation of China with the Grant no. 12071194.

\section{Declaration of generative AI and AI-assisted technologies in the manuscript preparation process}
During the preparation of this work, the authors used AI for checking grammatical errors and typos during the revision stage. The authors reviewed and edited the output as needed and take full responsibility for the content of the published article.

\end{CJK}
\end{document}